\numberwithin{equation}{section}
\newtheorem{thm}{Theorem}[section]
\newtheorem{lem}[thm]{Lemma}
\theoremstyle{definition}
\newtheorem{defn}[thm]{Definition}
\newtheorem{rem}[thm]{Remark}
\newtheorem*{b-divisor}{b-divisors} 
\newtheorem*{g-pair}{Generalized pairs} 
\newtheorem*{adj-g-pair}{Divisorial adjunction for generalized pairs} 
\newtheorem*{mmp-g-pair}{MMP for generalized pairs}
\newtheorem*{claim*}{Claim}
\def\restriction#1#2{\mathchoice
              {\setbox1\hbox{${\displaystyle #1}_{\scriptstyle #2}$}
              \restrictionaux{#1}{#2}}
              {\setbox1\hbox{${\textstyle #1}_{\scriptstyle #2}$}
              \restrictionaux{#1}{#2}}
              {\setbox1\hbox{${\scriptstyle #1}_{\scriptscriptstyle #2}$}
              \restrictionaux{#1}{#2}}
              {\setbox1\hbox{${\scriptscriptstyle #1}_{\scriptscriptstyle #2}$}
              \restrictionaux{#1}{#2}}}
\def\restrictionaux#1#2{{#1\,\smash{\vrule height .8\ht1 depth .85\dp1}}_{\,#2}}
\title{Crepant semi-divisorial log-terminal model}
\author{Kenta Hashizume}
\address{Graduate School of Mathematical Sciences, 
The University of Tokyo, 3-8-1 Komaba Meguro-ku Tokyo 153-8914, Japan}
\email{hkenta@ms.u-tokyo.ac.jp, hashizumekenta1@gmail.com}
\begin{document}



\maketitle

\begin{prelims}

\DisplayAbstractInEnglish

\bigskip

\DisplayKeyWords

\medskip

\DisplayMSCclass

\bigskip

\languagesection{Fran\c{c}ais}

\bigskip

\DisplayTitleInFrench

\medskip

\DisplayAbstractInFrench

\end{prelims}


\newpage

\setcounter{tocdepth}{1}

\tableofcontents


\section{Introduction}\label{intro}
Throughout this note, we will work over the complex number field $\mathbb{C}$. In this paper, we deal with semi-divisorial log terminal (sdlt, for short) pairs. 

\begin{defn}[Semi-divisorial log terminal pair, \protect{\cite[Definition 5.19]{kollar-mmp}}]\label{defn--sdlt}
An slc pair $(X,\Delta)$ in Definition \ref{defn--slc} is a {\em semi-divisorial log terminal} ({\em sdlt}, for short) {\em pair} if we put $U\subset X$ as the largest open subset such that $(U,\restriction{\Delta}{U})$ has only simple normal crossing singularities, then $a(E,X,\Delta)>-1$ for every prime divisor $E$ over $X$ whose image on $X$ is contained in $X\setminus U$. 
\end{defn}

As discussed in Section \ref{sec5}, sdlt pairs have much better structures than structures of slc pairs. 
In this note, we prove the following theorem. 

\begin{thm}[Crepant sdlt model]\label{thm--crepant-sdlt}
Let $(X,\Delta)$ be a quasi-projective slc pair such that $\Delta$ is a $\mathbb{Q}$-divisor and every irreducible component of $X$ is normal in codimension one. 
Then there is an sdlt pair $(Y,\Gamma)$ with a projective birational morphism $f\colon Y\to X$ satisfying the following properties.
\begin{enumerate}[label=(\roman*)]
\item
There is an open dense subset $U \subset X$ such that $U$ (resp.~$f^{-1}(U))$ contains all the codimension one singular points of $X$ (resp.~$Y$) and $f$ is an isomorphism over $U$, 
\item
$K_{Y}+\Gamma=f^{*}(K_{X}+\Delta)$, and 
\item
$K_{Y}$ is $\mathbb{Q}$-Cartier and $(Y,0)$ is semi-terminal in the sense of Fujita \cite{fujita}. 
\end{enumerate}
In particular, every irreducible component $Y_{j}$ of $Y$ is normal, and every lc pair $(Y_{j},\Gamma_{j})$ is dlt, where the $\mathbb{Q}$-divisor $\Gamma_{j}$ on $Y_{j}$ is defined by $K_{Y_{j}}+\Gamma_{j}=\restriction{(K_{Y}+\Gamma)}{Y_{j}}$.
\end{thm}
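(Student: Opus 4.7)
The natural approach is to reduce to the lc normal case by normalization, construct the desired model on each irreducible component separately, and then glue back using Koll\'ar's theory of slc pairs. Let $\nu\colon X^{\nu}\to X$ be the normalization and write $K_{X^{\nu}}+\Theta=\nu^{*}(K_{X}+\Delta)$; by the slc hypothesis $(X^{\nu},\Theta)$ is a disjoint union of lc pairs $(X^{\nu}_{i},\Theta_{i})$ in which the conductor divisor appears with coefficient one, and $X$ is recovered from $X^{\nu}$ together with an involution $\tau$ on the normalization of the conductor. The normality-in-codimension-one hypothesis forces $\nu$ to be an isomorphism outside a closed subset of codimension at least two, so the candidate $U$ for property (i) can be taken to be the image in $X$ of the snc locus of $(X^{\nu},\Theta)$; this $U$ automatically contains all codimension-one singular points of $X$.

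For each $i$ I would then construct a projective birational morphism $f_{i}\colon Y_{i}\to X^{\nu}_{i}$ with $K_{Y_{i}}+\Gamma_{i}=f_{i}^{*}(K_{X^{\nu}_{i}}+\Theta_{i})$ such that $(Y_{i},\Gamma_{i})$ is $\mathbb{Q}$-factorial crepant dlt, $K_{Y_{i}}$ is $\mathbb{Q}$-Cartier, and $Y_{i}$ is terminal as a variety. Existence of a $\mathbb{Q}$-factorial crepant dlt model is standard; to upgrade the underlying variety to terminal, one runs a suitable $K_{Y_{i}}$-MMP over $X^{\nu}_{i}$ inside the dlt category, contracting or extracting exceptional divisors so that only those with strictly positive $K_{Y_{i}}$-discrepancy remain above $Y_{i}$. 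The conductor divisor $D_{i}\subset X^{\nu}_{i}$, which has coefficient one in $\Theta_{i}$, remains a coefficient-one component of $\Gamma_{i}$ throughout this process.

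The third step is to arrange the $f_{i}$'s to be compatible with the involution $\tau$, so that Koll\'ar's gluing theorem produces a projective demi-normal scheme $Y$ with a crepant morphism $f\colon Y\to X$ realizing $\coprod Y_{i}$ as the normalization of $Y$. The $\mathbb{Q}$-divisor $\Gamma$ on $Y$ obtained by gluing $\sum\Gamma_{i}$ satisfies $K_{Y}+\Gamma=f^{*}(K_{X}+\Delta)$, and $K_{Y}$ is $\mathbb{Q}$-Cartier by descent from each $Y_{i}$ together with compatibility along the conductor. The sdlt property follows from the component-wise dlt property and the fact that the conductor of $Y$ appears in $\Gamma$ with coefficient one, while Fujita semi-terminality of $(Y,0)$ reduces to terminality of each $Y_{i}$ plus compatibility of the restricted pairs on the conductor normalization under $\tau$. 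The final ``in particular'' assertion is then immediate from the construction, since each $Y_{j}$ is a component of $Y^{\nu}$ and $(Y_{j},\Gamma_{j})$ is by construction the dlt pair produced in the second step.

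I expect the principal obstacle to be $\tau$-equivariance of the construction in Step~2: MMP choices are not canonical, so harmonizing them across the two sides of the conductor is nontrivial. A natural tactic is to first pass to a $\tau$-symmetric common log-resolution of the conductor, and then to run the relevant MMP with scaling by a $\tau$-invariant relatively ample divisor, so that every extremal ray is $\tau$-stable and every step descends to a $\tau$-equivariant contraction. A secondary delicate point is verifying Fujita's full semi-terminal axiom globally on $Y$; this likely requires an inductive argument on the dimension of the conductor ensuring that the restricted pair on the conductor normalization also reaches the semi-canonical form demanded by Fujita's definition.
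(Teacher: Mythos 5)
Your route (normalize, build componentwise crepant dlt models, glue back via the involution $\tau$) is genuinely different from the paper's, but as written it has one false claim and it defers the central difficulty to an unspecified compatibility step. The false claim first: the hypothesis that every irreducible component of $X$ is normal in codimension one does \emph{not} force the normalization $\nu\colon X^{\nu}\to X$ to be an isomorphism outside a set of codimension two. A demi-normal scheme has normal crossing points in codimension one, and $\nu$ is two-to-one over that locus; the hypothesis only says that, outside a codimension-two set, the non-normality of $X$ is accounted for by two distinct components meeting transversally, i.e.\ $(X,0)$ is snc there. This matters because property (i) is precisely about controlling the model over that codimension-one double locus, and your candidate $U$ is not justified by the stated reason.

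The central gap is the gluing. The involution $\tau$ is defined only on the normalization of the conductor, not on $X^{\nu}$, so ``running a $\tau$-equivariant MMP with scaling by a $\tau$-invariant ample divisor'' is not well-posed: extremal rays on distinct components of $X^{\nu}$ have no a priori relation, and $\tau$ can interchange conductor components sitting on different $X^{\nu}_{i}$. What you actually need is that the independently chosen maps $Y_{i}\dashrightarrow X^{\nu}_{i}$ induce birational maps of the conductor divisors commuting with $\tau$, and that the resulting involution still satisfies Koll\'ar's gluing hypotheses (finiteness of the equivalence relation, preservation of the different). Resolving this is exactly the content of the Ambro--Koll\'ar theory of MMP for slc pairs, which the paper uses instead of your decomposition: there each step is forced to be \emph{the} log canonical model over an extremal contraction of the non-normal scheme itself (Definition \ref{defn--mmpslc}, Lemma \ref{lem--mmpstepslc}), and the uniqueness of lc models is what makes the descent of $\tau$ automatic, with no choices to harmonize. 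Concretely, the paper never separates the components: it starts from a semi-resolution of the non-normal $X$ to a globally embedded snc pair (Lemma \ref{lem--resol}, via Bierstone--Milman and Bierstone--Vera Pacheco), which already controls the codimension-one double locus and hence gives (i) and (v), and then runs the slc-MMP with scaling over $X$; semi-terminality of $(Y,0)$ falls out of discrepancy monotonicity from the log smooth starting model (conditions \ref{lem--(i)} and \ref{lem--(iv)} of Lemma \ref{lem--resol}), rather than from the separate terminalization you propose --- which is stronger than what the theorem claims, not obviously compatible with keeping the pair crepant and dlt, and in any case reintroduces non-canonical choices that would again have to be matched across the conductor.
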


See \cite[Definition 2.3 (2)]{fujita} or Definition \ref{defn--semi-ter} for the definition of semi-terminal pairs.

\begin{rem}\label{rem--crepant-sdlt-difference}
Let $f \colon (Y,\Gamma)\to (X,\Delta)$ be a crepant sdlt model as in Theorem \ref{thm--crepant-sdlt}. 
Since $X$ is an S$_{2}$ scheme and $f$ is an isomorphism over all the generic points of codimension one singular locus, the morphism
$$\mathcal{O}_{X}(\lfloor m(K_{X}+\Delta)\rfloor) \longrightarrow f_{*}\mathcal{O}_{Y}(\lfloor m(K_{Y}+\Gamma)\rfloor)$$
is an isomorphism for every positive integer $m$. 
\end{rem}

\begin{rem}[Natural double cover by Koll\'ar \protect{\cite[5.23]{kollar-mmp}}, see also \protect{\cite[Lemma 5.1]{fujino-fund-slc}}]\label{rem--double-cover}
Let $(X,\Delta)$ be an slc pair such that $\Delta$ is an $\mathbb{R}$-divisor. 
Although irreducible components of $X$ may not be normal in codimension one, by  \cite[5.23]{kollar-mmp}, we can construct a quasi-\'etale double cover $\pi\colon X'\to X$ and an slc pair $(X',\Delta')$ such that $K_{X'}+\Delta'=\pi^{*}(K_{X}+\Delta)$ and every irreducible component of $X'$ is normal in codimension one. 
More precisely, applying \cite[5.23]{kollar-mmp} we get a finite morphism $\pi \colon X'\to X$ of degree two such that 
\begin{enumerate}[label=(\alph*)]
\item \label{rem--double-cover-(a)}
$X'$ is an S$_{2}$ scheme, 
\item \label{rem--double-cover-(b)}
$\pi$ is \'etale in codimension one, 
\item \label{rem--double-cover-(c)}
every irreducible component of $X'$ is normal in codimension one, and
\item \label{rem--double-cover-(d)}
the normalization of $X'$ is a disjoint union of two copies of the normalization of $X$. 
\end{enumerate}
Defining $\Delta'$ on $X'$ by $K_{X'}+\Delta'=\pi^{*}(K_{X}+\Delta)$, then the conditions  \ref{rem--double-cover-(a)}--\ref{rem--double-cover-(d)} imply that $(X',\Delta')$ is an slc pair whose irreducible components are normal in codimension one. If $X$ is quasi-projective and $\Delta$ is a $\mathbb{Q}$-divisor, then so is the above $X'$ and $\Delta'$.  
Thus, we can apply Theorem~\ref{thm--crepant-sdlt} to $(X',\Delta')$. 
\end{rem}

The assumption on the normality in codimension one for all irreducible components of $X$ looks artificial. 
But, all slc pairs admitting a crepant sdlt model satisfy this property because crepant models as in Theorem~\ref{thm--crepant-sdlt} do not modify any codimension one singular point and every irreducible component of the underlying schemes of sdlt pairs is normal. 
The first condition on $f$ in Theorem \ref{thm--crepant-sdlt} is important to preserve information of $K_{X}+\Delta$. 
We can not deduce properties of $K_{X}+\Delta$ from properties of $K_{Y}+\Gamma$ without the first condition in Theorem \ref{thm--crepant-sdlt} even in the case of the normalization of $X$. 
For example, Koll\'ar \cite{kollar-two-example} gave an example of projective slc surface whose canonical ring is not finitely generated, though it is well known that the log canonical rings of lc surfaces are finitely generated. 

In the case of lc pairs, a lot of partial resolutions are known. 
In the non-normal case, Koll\'ar 
and Shepherd--Barron \cite{ksb} established the minimal 
semi-resolution for surfaces, and Fujita \cite{fujita} 
established the semi-terminal modifications for demi-normal schemes. 
In \cite{odaka-xu}, Odaka and Xu proved the existence of an slc modification of demi-normal pairs under a certain assumption, and their result was generalized by Fujino and the author \cite{fh-lcmodel}.  
On the other hand, to the best of the author's knowledge, no result on the existence of a good crepant model of slc pairs is known. 
Theorem \ref{thm--crepant-sdlt} was motivated by \cite[Question 5.22.1]{kollar-mmp} asked by Koll\'ar. 
In \cite[Question 5.22.1]{kollar-mmp}, he asked whether there is a crepant sdlt model $(Y,\Gamma)\to (X,\Delta)$ as in Theorem \ref{thm--crepant-sdlt} such that the demi-normal pair $(Y,0)$ is not necessarily semi-terminal or $\mathbb{Q}$-Gorenstein but the coefficients of the exceptional divisors in $\Gamma$ are one. 
We could not give the complete affirmative answer to \cite[Question 5.22.1]{kollar-mmp}, but we hope that the main result will contribute developments of the theory of slc pairs. 

The contents of this note are as follows: in Section \ref{sec2}, we collect definitions of demi-normal pairs, simple normal crossing singularity, slc pair, semi-terminal pair, and an MMP step for slc pairs. 
In Section \ref{sec3}, we discuss a sufficient condition to construct a sequence of MMP steps for slc pairs. 
In Section \ref{sec4}, we prove Theorem~\ref{thm--crepant-sdlt}. 
In Section \ref{sec5}, we collect properties of sdlt pairs.

\subsection*{Acknowledgments}
The author is grateful to Professors J\'anos Koll\'ar and Yuji Odaka for discussions and comments. 
He thanks Professor Osamu Fujino for informing him of \cite{fujino-van}. 
He thanks Professor Yoshinori Gongyo for answering questions. 
He thanks Professor Kento Fujita for comments and warm encouragement. 
He thanks the referee for useful comments.

\section{Definitions}\label{sec2}

Throughout this note, a {\em scheme} means a separated scheme of finite type over ${\rm Spec} (\mathbb{C})$.

\begin{defn}[Demi-normal scheme, demi-normal pair, \protect{\cite[Chapter 5]{kollar-mmp}}]\label{defn--demi-normal}
A {\em demi-normal scheme} $X$ is a reduced equidimensional scheme such that $X$ satisfies the Serre's $\mathrm{S}_{2}$ condition and every codimension one point of $X$ is smooth or normal crossing singularity. 
Let $\Delta$ be an $\mathbb{R}$-divisor on $X$ whose support does not contain any codimension one singular point. 
The pair of $X$ and $\Delta$, denoted by $(X,\Delta)$, is called a {\em demi-normal pair} if $\Delta$ is effective. 
\end{defn}

\begin{defn}[Simple normal crossing singularity, {\cite[Definition 1.10]{kollar-mmp}}]\label{defn--snc}
Let $X$ be a reduced equidimensional scheme and let $\Delta$ be an $\mathbb{R}$-divisor on $X$. 
We say that the pair of $X$ and $\Delta$, denoted by $(X,\Delta)$ if there is no risk of confusion, has {\em only  simple normal crossing singularities at a point $a\in X$} if $X$ has a Zariski open subset $U \ni a$ that can be embedded in a smooth variety $M$, where $M$ has regular local coordinates $x_{1},\ldots, x_{p},\,y_{1},\ldots ,\,y_{r}$ at $a=0$ such that $U=(x_{1}\cdots x_{p}=0)$ and $\restriction{\Delta}{U}=\restriction{\sum_{i=1}^{r} \alpha_{i}(y_{i}=0)}{U}$ for some $\alpha_{i} \in \mathbb{R}$. 
We say that $(X,\Delta)$ has {\em only simple normal crossing singularities} if $(X,\Delta)$ has only simple normal crossing singularities at every point $a\in X$. 
\end{defn}

Simple normal crossing singularities are called semi-snc in \cite{kollar-mmp}. 

\begin{defn}[Semi-log canonical pair]\label{defn--slc}
Let $(X,\Delta)$ be a demi-normal pair such that $K_{X}+\Delta$ is $\mathbb{R}$-Cartier. 
In this note, a {\em prime divisor $E$ over $X$} is a prime divisor on $Y$ with a proper birational morphism $Y\to X^{\nu}$ to the normalization $X^{\nu}$ of $X$. 
As in the case of log pairs, we can define the {\em discrepancy $a(E,X,\Delta)$ of $E$ with respect to $(X,\Delta)$} for every prime divisor $E$ over $X$. 
A demi-normal pair $(X,\Delta)$ is a {\em semi-log canonical} ({\em slc}, for short) {\em pair} if $K_{X}+\Delta$ is $\mathbb{R}$-Cartier and $a(E,X,\Delta) \geq -1$ for every prime divisor $E$ over $X$.  
\end{defn}

As we defined above, we follow the definitions of demi-normal schemes, slc pairs and sdlt pairs (see Definition \ref{defn--sdlt}) in \cite[Section 5]{kollar-mmp}. 

\begin{rem}\label{rem--sdlt-differ}
In \cite{fujino1} and \cite{fujino-gongyo}, sdlt pairs are defined to be slc pairs $(X,\Delta)$ such that 
\begin{itemize}
\item[---]
every irreducible component of $X$ is normal, and 
\item[---]
the normalization of $(X,\Delta)$ is a disjoint union of dlt pairs.
\end{itemize}
On the other hand, our definition of sdlt pairs is due to \cite[Definition 5.19]{kollar-mmp}. 
The sdlt pairs in the sense of Definition \ref{defn--sdlt} are also sdlt pairs in the above sense but the converse is not true. 
Indeed, for every sdlt pair $(X,\Delta)$ in the sense of Definition \ref{defn--sdlt}, the normalization of $(X,\Delta)$ is a disjoint union of dlt pairs, and \cite[Proposition 5.20]{kollar-mmp} implies that every irreducible component of $X$ is normal.
Thus, $(X,\Delta)$ satisfies the above two conditions. 
On the other hand, as in \cite[Definition 5.19]{kollar-mmp}, the pair of $X:=(xy=zt=0)\subset \mathbb{A}^{4}_{xyzt}$ and $\Delta:=0$ is not sdlt in the sense of Definition \ref{defn--sdlt}, but $(X,0)$ is an slc pair satisfying the two conditions stated above. 
\end{rem}

Let $X$ be a demi-normal scheme. 
If every irreducible component of $X$ is normal in codimension one, then there exists a closed subscheme $Z\subset X$ of codimension at least two such that $(X\setminus Z,0)$ has only simple normal crossing singularities. 

\begin{defn}[Semi-terminal pair, \protect{\cite[Definition 2.3 (2)]{fujita}}]\label{defn--semi-ter}
Let $(X,\Delta)$ be an slc pair. 
Let $\nu\colon X^{\nu} \to X$ be the normalization, and we define $\Delta^{\nu}$ on $X^{\nu}$ by $K_{X^{\nu}}+\Delta^{\nu}=\nu^{*}(K_{X}+\Delta)$. 
Let $\lfloor \Delta^{\nu} \rfloor$ be the reduced part of $\Delta^{\nu}$. 
Then $(X,\Delta)$ is {\em semi-terminal} if $a(P,X,\Delta)\geq 0$ for all exceptional prime divisors $P$ over $X$ and the equality holds only if the center of $P$ on $X^{\nu}$ has codimension two and it is contained in $\lfloor \Delta^{\nu} \rfloor$. 
\end{defn}

\begin{defn}[An MMP step for slc pairs, \protect{\cite[Definition 11]{ambrokollar}}]\label{defn--mmpslc}
Let $\pi \colon X\to S$ be a projective morphism between quasi-projective schemes, and let $(X,\Delta)$ be an slc pair such that $\Delta$ is a $\mathbb{Q}$-divisor. 
Then, an {\em MMP step} over $S$ for $(X,\Delta)$ is the following diagram  
$$
\xymatrix
{
(X,\Delta)\ar[dr]_{f}\ar@{-->}[rr]^{\phi}&&(X',\Delta')\ar[dl]^{f'}\\
&Z
}
$$
over $S$ such that
\begin{enumerate}
\item \label{mmpslc-1}
$(X',\Delta')$ is an slc pair,
\item \label{mmpslc-2}
$\phi$ is birational and it is isomorphic on an open dense subset containing all generic points of codimension one singular locus of $X$,  
\item \label{mmpslc-3}
$\Delta'=\phi_{*}\Delta$,  
\item \label{mmpslc-4}
$Z$ and $X'$ are quasi-projective schemes, the morphisms $Z \to S$ and $X'\to S$ are projective, $f$ and $f'$ are generically finite, and $f'$ has no exceptional divisors, and 
\item \label{mmpslc-5}
$-(K_{X}+\Delta)$ and $K_{X'}+\Delta'$ are ample over $Z$. 
\end{enumerate}
\end{defn}

\section{Log MMP for semi-log canonical pairs}\label{sec3}

In this section, we discuss the construction and properties of log MMP for slc pairs. 
See \cite{ambrokollar} and \cite[Section 4]{has-finite} for related results. 

\begin{lem}[\emph{cf.}~\protect{\cite{ambrokollar}}, \protect{\cite[Lemma 4.4]{has-finite}}]\label{lem--mmpstepslc}
Let $\pi \colon X\to S$ be a projective morphism between quasi-projective schemes, and let $(X,\Delta)$ be an slc pair such that $\Delta$ is a $\mathbb{Q}$-divisor. 
Let $\nu \colon (\bar{X},\bar{\Delta})\to (X,\Delta)$ be the normalization, where $K_{\bar{X}}+\bar{\Delta}=\nu^{*}(K_{X}+\Delta)$. 
Suppose that every irreducible component $(\bar{X}^{(j)},\bar{\Delta}^{(j)})$ of $(\bar{X},\bar{\Delta})$ satisfies the following condition.  
\begin{itemize}
\item[]
The relative stable base locus of $K_{\bar{X}^{(j)}}+\bar{\Delta}^{(j)}$ over $S$ does not contain the image of any prime divisor $P$ over $\bar{X}^{(j)}$ satisfying $a(P,\bar{X}^{(j)},\bar{\Delta}^{(j)})<0$. 
\end{itemize}
Let $H$ be an effective $\mathbb{Q}$-Cartier divisor such that $(X,\Delta+cH)$ is an slc pair and $K_{X}+\Delta+c H$ is nef over $S$ for some $c>0$. 
Set $\lambda={\rm inf}\left\{\mu \in \mathbb{R}_{\geq 0}\mid K_{X}+\Delta+\mu H\ \text{is nef over}\ S\right\}$.

If $\lambda>0$, then we may construct an MMP step over $S$ for $(X,\Delta)$ as in Definition \ref{defn--mmpslc}
$$
\xymatrix
{
(X,\Delta)\ar[dr]_{f}\ar@{-->}[rr]^{\phi}&&(X',\Delta')\ar[dl]^{f'}\\
&Z
}
$$
such that $f\colon X\to Z$ is a contraction of a $(K_{X}+\Delta)$-negative extremal ray $R$ over $S$ $($see \cite[Theorem 1.19]{fujino-fund-slc}$)$ satisfying $(K_{X}+\Delta+\lambda H)\cdot R=0$. 
In particular, $(X',\Delta'+\lambda H')$ is an slc pair and $K_{X'}+\Delta'+\lambda H'$ is nef over $S$, where $H'=\phi_{*}H$. 
Furthermore, for the normalization $\nu'\colon(\bar{X}',\bar{\Delta}')\to (X',\Delta')$, where $K_{\bar{X}'}+\bar{\Delta}'=\nu'^{*}(K_{X}+\Delta)$, every irreducible component $(\bar{X}'^{(j)},\bar{\Delta}'^{(j)})$ of $(\bar{X}',\bar{\Delta}')$ satisfies the following condition.
\begin{itemize}
\item[]
The relative stable base locus of $K_{\bar{X}'^{(j)}}+\bar{\Delta}'^{(j)}$ over $S$ does not contain the image of any prime divisor $P'$ over $\bar{X}'^{(j)}$ satisfying $a(P',\bar{X}'^{(j)},\bar{\Delta}'^{(j)})<0$. 
\end{itemize}
\end{lem}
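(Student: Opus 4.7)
The plan is to produce the MMP step in two stages: first extract a $(K_{X}+\Delta)$-negative extremal ray and contract it, then construct the ``other side'' $X'$ by passing to the normalization and running a single step of lc MMP component by component, finally descending by gluing.

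First, since $K_{X}+\Delta+cH$ is nef over $S$ and $\lambda>0$, the divisor $K_{X}+\Delta$ fails to be nef over $S$ while $K_{X}+\Delta+\lambda H$ is nef. I would apply the cone and contraction theorem for slc pairs (Fujino, cited in the statement as \cite[Theorem 1.19]{fujino-fund-slc}) to produce a $(K_{X}+\Delta)$-negative extremal ray $R$ over $S$ on which $(K_{X}+\Delta+\lambda H)\cdot R=0$. Contracting $R$ yields a projective morphism $f\colon X\to Z$ over $S$ with $-(K_{X}+\Delta)$ ample over $Z$, supplying the left half of the diagram and item \eqref{mmpslc-5} on the $X$ side. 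Because the extremal ray in the slc cone avoids curves through the normal-crossing codimension-one points (their neighborhoods look like a product with $\mathbb{A}^{1}$ and the normalization splits there), $f$ is an isomorphism over an open set containing those points.

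To construct $(X',\Delta')$, I would pull back to the normalization $\nu\colon(\bar X,\bar\Delta)\to(X,\Delta)$. Each component $(\bar X^{(j)},\bar\Delta^{(j)})$ is lc, and the hypothesis on stable base loci is precisely the input needed to guarantee that a $(K_{\bar X^{(j)}}+\bar\Delta^{(j)})$-MMP step with scaling of $H|_{\bar X^{(j)}}$ can be carried out with value $\lambda$, mirroring the argument of \cite[Lemma 4.4]{has-finite}. This produces either a divisorial contraction or a flip for each component over the appropriate base, and hence lc pairs $(\bar X'^{(j)},\bar\Delta'^{(j)})$ projective over $Z$ with $K_{\bar X'^{(j)}}+\bar\Delta'^{(j)}$ ample there. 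By Koll\'ar's gluing theorem for slc pairs (\cite[Chapter 5]{kollar-mmp}), these glue across the conductor to an slc pair $(X',\Delta')$ together with $f'\colon X'\to Z$, giving items \eqref{mmpslc-1}--\eqref{mmpslc-4} and the ampleness in \eqref{mmpslc-5} on the $X'$ side; that $f'$ has no exceptional divisors is automatic since each component step is small or divisorial but contracts only $f$-exceptional components.

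Finally, the ``in particular'' clause is immediate: $\phi_{*}(\lambda H)=\lambda H'$ is the pushforward of a nef, and on the new model $K_{X'}+\Delta'+\lambda H'$ is $f'$-trivial and $Z/S$-nef, hence nef over $S$; slcness of $(X',\Delta'+\lambda H')$ follows from the discrepancy inequality along MMP steps. The ``furthermore'' clause about preservation of the stable base locus hypothesis on $(\bar X',\bar\Delta')$ follows from monotonicity of discrepancies: if $a(P',\bar X'^{(j)},\bar\Delta'^{(j)})<0$, then the corresponding divisor $P$ over $\bar X^{(j)}$ satisfies $a(P,\bar X^{(j)},\bar\Delta^{(j)})\le a(P',\bar X'^{(j)},\bar\Delta'^{(j)})<0$, so by hypothesis its image avoids the relative stable base locus upstairs, and the stable base locus cannot grow under the step in question. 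The step I expect to be the main obstacle is the gluing stage: one must check that the extremal ray $R$ on $X$ lifts to a Galois-equivariant (with respect to the involution on the conductor) collection of extremal rays on the components of $\bar X$, so that the component-wise MMP steps actually descend to a scheme $X'$ over $Z$, rather than only producing an incompatible disjoint union.
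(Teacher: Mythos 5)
Your overall architecture --- contract a $(K_{X}+\Delta)$-negative extremal ray via Fujino's cone theorem, construct the new model on the normalization component by component, then glue --- is the same as the paper's. But the way you carry out the componentwise stage contains the real gap, and you have flagged it yourself without resolving it. You model the construction on each $\bar X^{(j)}$ as a single step of lc MMP (a divisorial contraction or a flip) and then observe that one must check that the extremal ray $R$ lifts to a Galois-equivariant collection of extremal rays on the components. That check is not something you can expect to make: the restriction of $f\colon X\to Z$ to a component $\bar X^{(j)}$ is in general not the contraction of a single extremal ray of $\overline{NE}(\bar X^{(j)}/S)$ --- it may contract a whole face --- so there is no ``collection of extremal rays'' to match equivariantly. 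The paper avoids the issue entirely: it takes the relative log canonical model $\bar\phi^{(j)}\colon(\bar X^{(j)},\bar\Delta^{(j)})\dashrightarrow(\bar X'^{(j)},\bar\Delta'^{(j)})$ over $Z$, which exists by \cite[Theorem 1.1]{birkar-flip} because the base-locus hypothesis forces $K_{\bar X^{(j)}}+\bar\Delta^{(j)}$ to be $\mathbb{Q}$-linearly equivalent over $S$ to an effective divisor. Being a canonical object over $Z$, it requires no ray-matching; the descent to a scheme $X'$ is then exactly \cite[proof of Theorem 9]{ambrokollar}, and the input that makes that proof work is the observation that the non-isomorphic locus of each $\bar\phi^{(j)}$ lies in the relative stable base locus, which by hypothesis misses every divisor of negative discrepancy --- in particular the conductor divisors --- so the involution on the normalized conductor transports across the maps $\bar\phi^{(j)}$. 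This is the precise role of the base-locus hypothesis in the gluing, and your proposal does not supply it.

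A second, smaller omission: you never verify that $f\colon X\to Z$ is generically finite, which is condition (\ref{mmpslc-4}) of Definition \ref{defn--mmpslc}; your remark that $f$ is an isomorphism near the codimension-one nodes because their neighborhoods ``look like a product with $\mathbb{A}^{1}$'' is not an argument. The paper again uses the hypothesis here: each $K_{\bar X^{(j)}}+\bar\Delta^{(j)}$ is effective up to relative $\mathbb{Q}$-linear equivalence while $-(K_{\bar X^{(j)}}+\bar\Delta^{(j)})$ is ample over $Z$, which is impossible on a positive-dimensional generic fibre, so each $\bar X^{(j)}\to Z$ is generically finite and $f$ is birational. Your treatment of the ``in particular'' and ``furthermore'' clauses is essentially the paper's (descent of $m(K_{X}+\Delta+\lambda H)$ to a line bundle on $Z$, monotonicity of discrepancies and the inclusion of the new stable base locus in the image of the old one), so those parts are fine once the construction of $X'$ is repaired.
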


\begin{proof}
The proof is essentially the same as \cite[Subsection 4.2]{fujita}. By \cite[Theorem 1.19 (1)]{fujino-fund-slc}, we can find a $(K_{X}+\Delta)$-negative extremal ray $R$ such that $(K_{X}+\Delta+\lambda H)\cdot R=0$. 
Since $K_{X}+\Delta$ and $H$ are $\mathbb{Q}$-Cartier, $\lambda$ is a rational number. 
From now on, we follow \cite[Proof of Lemma 4.4]{has-finite}. 

By \cite[Theorem 1.19 (3)]{fujino-fund-slc}, we get a projective contraction $f\colon X\to Z$ over $S$ that exactly contracts the extremal ray $R$. 
We put $\bar{H}=\nu^{*}H$. 
By our assumption on $(\bar{X}^{(j)},\bar{\Delta}^{(j)})$, we see that $K_{\bar{X}^{(j)}}+\bar{\Delta}^{(j)}$ is $\mathbb{Q}$-linearly equivalent to an effective $\mathbb{Q}$-Cartier divisor over $S$. 
On the other hand, by construction $-(K_{\bar{X}^{(j)}}+\bar{\Delta}^{(j)})$ is ample over $Z$. 
Thus, the morphism $\bar{X}^{(j)} \to Z$ is generically finite for every $j$, so $f\colon X\to Z$ is birational. 
By \cite[Theorem 1.1]{birkar-flip}, the lc pair $(\bar{X}^{(j)},\bar{\Delta}^{(j)})$ has the log canonical model $\bar{\phi}^{(j)} \colon (\bar{X}^{(j)},\bar{\Delta}^{(j)})\dashrightarrow (\bar{X}'^{(j)},\bar{\Delta}'^{(j)})$ over $Z$ as in \cite[Definition 3.50]{kollar-mori}. 
Put $(\bar{X}',\bar{\Delta}')={\coprod_{j}}(\bar{X}'^{(j)},\bar{\Delta}'^{(j)})$. 
We consider the diagram
$$
\xymatrix
{
(\bar{X},\bar{\Delta})\ar[dr]\ar@{-->}[rr]&&(\bar{X}',\bar{\Delta}')\ar[dl]\\
&Z
}
$$
over $S$, where the birational map $\bar{X}\dashrightarrow \bar{X}'$ is defined by $\amalg_{j}\bar{\phi}^{(j)}\colon \amalg_{j}\bar{X}^{(j)} \dashrightarrow \amalg_{j}\bar{X}'^{(j)}$. 
By the same argument as in \cite[Proof of Lemma 4.4]{has-finite}, the diagram is an MMP step over $S$ for $(\bar{X},\bar{\Delta})$ as in Definition \ref{defn--mmpslc}. 
Moreover, the non-isomorphic locus of  $\bar{\phi}^{(j)}$ is contained in the relative stable base locus of $K_{\bar{X}^{(j)}}+\bar{\Delta}^{(j)}$, which does not contain the image of any prime divisor $P$ over $\bar{X}^{(j)}$ satisfying $a(P,\bar{X}^{(j)},\bar{\Delta}^{(j)})<0$. 
Then \cite[Proof of Theorem 9]{ambrokollar} works. 
In this way, we get an slc pair $(X',\Delta')$ and a projective morphism $f'\colon X'\to Z$ such that $(\bar{X}',\bar{\Delta}')$ is the normalization of $(X',\Delta')$, and the diagram
$$
\xymatrix
{
(X,\Delta)\ar[dr]_{f}\ar@{-->}[rr]^{\phi}&&(X',\Delta')\ar[dl]^{f'}\\
&Z
}
$$
 is an MMP step over $S$ for $(X,\Delta)$ as in Definition \ref{defn--mmpslc}. 
As in \cite[Proof of Lemma 4.4]{has-finite}, the relative stable base locus of $K_{\bar{X}'^{(j)}}+\bar{\Delta}'^{(j)}$ over $S$ does not contain the image of any prime divisor $P'$ over $\bar{X}'^{(j)}$ satisfying $a(P',\bar{X}'^{(j)},\bar{\Delta}'^{(j)})<0$. 
We pick a positive integer $m$ such that $m(K_{X}+\Delta+\lambda H)$ is Cartier. 
By \cite[Theorem~1.19~(4)]{fujino-fund-slc} and using the fact that $(K_{X}+\Delta+\lambda H)\cdot R=0$, there exists a line bundle $\mathcal{L}$ on $Z$ such that $\mathcal{O}_{X}(m(K_{X}+\Delta+\lambda H)) \simeq f^{*}\mathcal{L}$. 
Thus, $K_{X'}+\Delta'+\lambda H'$ is nef over $S$, where $H'=\phi_{*}H$, and the slc property of $(X',\Delta'+\lambda H')$ follows from that the normalization of $(X',\Delta'+\lambda H')$ is a disjoint union of lc pairs. 
\end{proof}

\begin{rem}\label{rem--mmpstepslc}
By \cite[Theorem 1.19 (4)]{fujino-fund-slc}, the MMP step constructed in Lemma \ref{lem--mmpstepslc} satisfies the property that for every $\mathbb{Q}$-Cartier divisor $D$ on $X$ the birational transform $\phi_{*}D$ is also $\mathbb{Q}$-Cartier. 
Especially, the MMP step in Lemma \ref{lem--mmpstepslc} preserves the $\mathbb{Q}$-Gorenstein property. 
Moreover, the above proof (see also \cite[Proof of Theorem 9]{ambrokollar}) shows that the MMP step constructed in Lemma \ref{lem--mmpstepslc} satisfies that the inverse map $\phi^{-1}\colon X'\dashrightarrow X$ is an isomorphism on an open dense subset containing all the generic points of codimension one singular locus of $X'$.    
As we will see in Lemma \ref{lem--mmp-preserve-sdlt} below, the property of $\phi^{-1}$ implies that the MMP step as in Lemma \ref{lem--mmpstepslc} preserves the sdlt property.
\end{rem}

\begin{defn}[MMP for slc pairs with scaling, \emph{cf.}~\protect{\cite[Subsection 4.2]{fujita}}]\label{defn--sequencemmp}
We explain the construction of a sequence of MMP steps for slc pairs. 

Let $\pi \colon X\to S$ be a projective morphism between quasi-projective schemes, and let $(X,\Delta)$ an slc pair such that $\Delta$ is a $\mathbb{Q}$-divisor. 
Let $\nu \colon (\bar{X},\bar{\Delta})\to (X,\Delta)$ be the normalization, where $K_{\bar{X}}+\bar{\Delta}=\nu^{*}(K_{X}+\Delta)$. 
Suppose that every irreducible component $(\bar{X}^{(j)},\bar{\Delta}^{(j)})$ of $(\bar{X},\bar{\Delta})$ satisfies the following condition. 
\begin{itemize} \item[] The relative stable base locus of $K_{\bar{X}^{(j)}}+\bar{\Delta}^{(j)}$ over $S$ does not contain the image of any prime divisor $P$ over $\bar{X}^{(j)}$ satisfying $a(P,\bar{X}^{(j)},\bar{\Delta}^{(j)})<0$.  \end{itemize}
Let $H$ be an effective $\mathbb{Q}$-Cartier divisor on $X$ such that $(X,\Delta+cH)$ is an slc pair and $K_{X}+\Delta+c H$ is nef over $S$ for some $c>0$. 

We define $\lambda:={\rm inf}\left\{\mu \in \mathbb{R}_{\geq 0}\mid K_{X}+\Delta+\mu H \ \text{is nef over}\ S\right\}.$ 
If $\lambda=0$, there is nothing to do. 
If $\lambda>0$, by Lemma \ref{lem--mmpstepslc} we can construct an MMP step over $S$
$$\phi \colon (X,\Delta)\dashrightarrow (X_{1},\Delta_{1})$$
such that $(X_{1},\Delta_{1}+\lambda H_{1})$ is an slc pair and $K_{X_{1}}+\Delta_{1}+\lambda H_{1}$ is nef over $S$, where $H_{1}=\phi_{*}H$. 
We define $\lambda_{1}:={\rm inf}\left\{\mu \in \mathbb{R}_{\geq 0}\mid K_{X_{1}}+\Delta_{1}+\mu H_{1}\ \text{is nef over}\ S\right\}$. 
Then $\lambda_{1}\leq \lambda$. 
If $\lambda_{1}=0$, then we stop the discussion. 
If $\lambda_{1}>0$, let $\nu_{1} \colon (\bar{X}_{1},\bar{\Delta}_{1})\to (X_{1},\Delta_{1})$ be the normalization, where $K_{\bar{X}_{1}}+\bar{\Delta}_{1}=\nu_{1}^{*}(K_{X}+\Delta)$. 
Then, every irreducible component $(\bar{X}_{1}^{(j)},\bar{\Delta}_{1}^{(j)})$ of $(\bar{X}_{1},\bar{\Delta}_{1})$ satisfies the condition of Lemma \ref{lem--mmpstepslc}. 
Hence, we may apply Lemma \ref{lem--mmpstepslc} to $(X_{1},\Delta_{1})\to S$ and $H_{1}$. 
By Lemma \ref{lem--mmpstepslc}, we get an MMP step over $S$
$$(X_{1},\Delta_{1})\dashrightarrow (X_{2},\Delta_{2}).$$
By repeating this discussion, we get a sequence of MMP steps over $S$
$$(X,\Delta)=:(X_{0},\Delta_{0}) \dashrightarrow (X_{1},\Delta_{1}) \dashrightarrow \cdots \dashrightarrow (X_{i},\Delta_{i})\dashrightarrow \cdots$$
such that  if we put $\lambda_{i}={\rm inf}\left\{\mu \in \mathbb{R}_{\geq 0}\mid K_{X_{i}}+\Delta_{i}+\mu H_{i}\ \text{is nef over}\ S\right\}$ for each $i \geq 0$, where $H_{i}$ is the birational transform of $H$ on $X_{i}$, then $\lambda_{i} \geq \lambda_{i+1}$ for all $i\geq 0$. 
\end{defn}

\begin{lem}\label{lem--mmp-preserve-sdlt}
Let $\pi \colon X\to S$ be a projective morphism between quasi-projective schemes, and let $(X,\Delta)$ be an slc pair such that $\Delta$ is a $\mathbb{Q}$-divisor. 
Let $$
\xymatrix
{
(X,\Delta)\ar[dr]_{f}\ar@{-->}[rr]^{\phi}&&(X',\Delta')\ar[dl]^{f'}\\
&Z
}
$$
be an MMP step over $S$ as in Definition \ref{defn--mmpslc} such that the inverse map $\phi^{-1}\colon X'\dashrightarrow X$ is an isomorphism on an open dense subset containing all the generic points of codimension one singular locus of $X'$. 
Suppose further that $(X,\Delta)$ is sdlt. 
Then $(X',\Delta')$ is sdlt.
\end{lem}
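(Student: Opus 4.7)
The plan is to verify the sdlt condition of Definition \ref{defn--sdlt} for $(X',\Delta')$ by comparing discrepancies along the MMP step, splitting into cases according to whether the center on $X'$ of the divisor under consideration lies in the maximal isomorphism locus of $\phi^{-1}$.

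\textbf{Step 1 (dlt on the normalization).} First, since $(X,\Delta)$ is sdlt, Remark \ref{rem--sdlt-differ} yields that every irreducible component of $X$ is normal and the normalization $(\bar X,\bar\Delta)=\coprod_{j}(X^{(j)},\Delta^{(j)})$ is a disjoint union of dlt pairs. The construction in the proof of Lemma \ref{lem--mmpstepslc} realizes $\bar\phi^{(j)}\colon (X^{(j)},\Delta^{(j)})\dashrightarrow(X'^{(j)},\Delta'^{(j)})$ as the log canonical model over $Z$. Because $-(K_{X^{(j)}}+\Delta^{(j)})$ is ample over $Z$, this model is obtained by a finite sequence of $(K_{X^{(j)}}+\Delta^{(j)})$-negative divisorial contractions and flips, each of which preserves dltness. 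Hence each $(X'^{(j)},\Delta'^{(j)})$ is dlt, every irreducible component of $X'$ is normal, and since $f'$ has no exceptional divisors over $Z$, each $(\bar\phi^{(j)})^{-1}$ has no exceptional divisors, i.e.\ is an isomorphism in codimension one.

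\textbf{Step 2 (snc loci via $\phi$).} Next, I would denote by $U\subset X$ and $U'\subset X'$ the largest open subsets on which $(X,\Delta)$ and $(X',\Delta')$ have only simple normal crossing singularities, and by $W\subset X$ and $W'\subset X'$ the maximal isomorphism loci of $\phi$ and $\phi^{-1}$ respectively, so that $\phi|_{W}\colon W\xrightarrow{\sim}W'$. By condition (2) of Definition \ref{defn--mmpslc} and the additional hypothesis of the lemma, $W$ (resp.\ $W'$) contains every generic point of the codimension one singular locus of $X$ (resp.\ $X'$); combined with Step 1 this yields $\mathrm{codim}_{X'}(X'\setminus W')\geq 2$. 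As snc is local and invariant under isomorphism, $\phi$ identifies $U\cap W$ with $U'\cap W'$.

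\textbf{Step 3 (discrepancy bound).} Let $E'$ be a prime divisor over $X'$, lying over a component $(X'^{(j)},\Delta'^{(j)})$, with center $c'$ on $X'$ contained in $X'\setminus U'$; let $\eta_{X'}$ be the generic point of $c'$. If $\eta_{X'}\in W'$, I would set $\eta_X=(\phi|_W)^{-1}(\eta_{X'})\in W$, the generic point of the center of $E'$ on $X$. If we had $\eta_X\in U$, Step 2 would give $\eta_{X'}=\phi(\eta_X)\in U'$, contradicting $\eta_{X'}\notin U'$; hence $\eta_X\notin U$, so the center of $E'$ on $X$ lies in the closed set $X\setminus U$. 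The sdlt property of $(X,\Delta)$ then yields $a(E',X,\Delta)>-1$, and discrepancy monotonicity along $\bar\phi^{(j)}$ gives $a(E',X',\Delta')\geq a(E',X,\Delta)>-1$. If instead $\eta_{X'}\notin W'$, then by the correspondence in Step 2 the center of $E'$ on $X^{(j)}$ is contained in the non-isomorphism locus of $\bar\phi^{(j)}$, hence in the exceptional locus of some step of the underlying MMP. The strict discrepancy increase for the MMP, via the negativity lemma, gives $a(E',X'^{(j)},\Delta'^{(j)})>a(E',X^{(j)},\Delta^{(j)})\geq -1$; therefore $a(E',X',\Delta')>-1$.

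\textbf{Main obstacle.} The chief technical point is in Step 2, namely establishing $\mathrm{codim}_{X'}(X'\setminus W')\geq 2$. This rests on Step 1 (the no-exceptional-divisor property of each $(\bar\phi^{(j)})^{-1}$) together with condition (4) of Definition \ref{defn--mmpslc} and the normality of the components of $X'$. With this codimension estimate in hand, Step 3 splits cleanly into two cases: one transfers the problem back to $(X,\Delta)$ via $\phi$ and invokes its sdlt property; the other invokes the strict discrepancy increase in the MMP on the normalization.
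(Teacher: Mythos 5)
Your overall strategy (pass to the normalization, then split according to whether the generic point $\eta_{X'}$ of the center of the divisor lies in the isomorphism locus $W'$ of $\phi^{-1}$) parallels the paper's, and your Case (a) of Step 3 is sound. The gap is in Case (b), and it sits exactly where the paper has to work hardest. The set $W'$ is the isomorphism locus of $\phi^{-1}$ on the \emph{non-normal} scheme $X'$, so $\eta_{X'}\notin W'$ only tells you that $\bar\phi^{-1}$ fails to be an isomorphism near \emph{some} point of $\nu'^{-1}(\eta_{X'})$. But $E'$ lives over a single component $X'^{(j)}$ of the normalization, and its center there is the closure of one particular point $p\in\nu'^{-1}(\eta_{X'})$. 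The strict inequality $a(E',X'^{(j)},\Delta'^{(j)})>a(E',X^{(j)},\Delta^{(j)})$ from \cite[Lemma 3.38]{kollar-mori} requires $\bar\phi^{(j)}$ to fail to be an isomorphism near \emph{that} point $p$; it does not follow from the failure of $\phi^{-1}$ at $\eta_{X'}$, which may be caused entirely by a different point $q\in\nu'^{-1}(\eta_{X'})$ on another analytic branch. In that scenario neither of your two cases yields anything: there is no strict discrepancy increase for $E'$, and you cannot transfer the problem to $(X,\Delta)$ because $\phi^{-1}$ is not an isomorphism at $\eta_{X'}$.

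Ruling out this scenario is the core of the paper's proof. It takes $P$ with $a(P,X',\Delta')=-1$, so that $\overline{\{p\}}$ is an lc center of the dlt pair $(\bar X'^{(j)},\bar\Delta'^{(j)})$, and then uses the gluing description of $X'$ as the quotient of $\bar X'$ by an involution $\tau$ of the normalized conductor (which preserves the adjunction divisors) to build a chain $p=p_{1},\dots,p_{m}=q$ joining $p$ to any other point $q\in\nu'^{-1}(\eta_{X'})$, proving by induction that every $\overline{\{p_{l}\}}$ is an lc center of $(\bar X',\bar\Delta')$. Only then does the \cite[Lemma 3.38]{kollar-mori}-type argument apply at \emph{every} point of $\nu'^{-1}(\eta_{X'})$, forcing $\bar\phi^{-1}$, and hence $\phi^{-1}$, to be an isomorphism near $\eta_{X'}$, after which one concludes as in your Case (a). Without this involution/lc-center chain argument the proof does not close. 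A smaller point: in Step 1 you assume the MMP step arises from the construction in Lemma \ref{lem--mmpstepslc}, whereas the lemma only posits an abstract MMP step as in Definition \ref{defn--mmpslc}; the paper instead invokes \cite[Lemma 12]{ambrokollar} to see that $\bar\phi$ is an MMP step on the normalization, which gives dltness of $(\bar X',\bar\Delta')$ without reference to the particular construction.
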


\begin{proof}
Let $\nu' \colon \bar{X}'\to X'$ be the normalization, and we define $\bar{\Delta}'$ by $K_{\bar{X}'}+\bar{\Delta}'=\nu'^{*}(K_{X}+\Delta)$. 
Similarly, let $\nu \colon \bar{X} \to X$ be the normalization, and we define $\bar{\Delta}$ by $K_{\bar{X}}+\bar{\Delta}=\nu^{*}(K_{X}+\Delta)$. 
Since $(X,\Delta)$ is an sdlt pair, $(\bar{X},\bar{\Delta})$ is a disjoint union of dlt pairs. 
By \cite[Lemma 12]{ambrokollar}, the birational map $\bar{\phi}\colon (\bar{X},\bar{\Delta}) \dashrightarrow (\bar{X}',\bar{\Delta}')$ is an MMP step over $S$ for $(\bar{X},\bar{\Delta})$. 
Thus, $(\bar{X}',\bar{\Delta}')$ is a disjoint union of dlt pairs. 

Let $P$ be a prime divisor over $X'$ such that $a(P,X',\Delta')=-1$, let $V'$ be the center of $P$ on $X'$, and let $\eta_{V'}$ is the generic point of $V'$. 
To prove that $(X',\Delta')$ is sdlt, we need to prove that $(X',\Delta')$ has only simple normal crossing singularities at $\eta_{V'}$. 
Since $(X,\Delta)$ is sdlt, it is sufficient to prove that $\phi^{-1}\colon X'\dashrightarrow X$ is an isomorphism on a neighborhood of $\eta_{V'}$. 
The latter condition follows if $\bar{\phi}^{-1}\colon \bar{X}' \dashrightarrow \bar{X}$ is an isomorphism on a neighborhood of $\nu'^{-1}(\eta_{V'})$. 
Here, we used the fact that $\phi$ and $\phi^{-1}$ are isomorphisms on open subsets containing all the generic points of codimension one singular loci. 
Since $\bar{\phi}\colon (\bar{X},\bar{\Delta}) \dashrightarrow (\bar{X}',\bar{\Delta}')$ is an MMP step for $(\bar{X},\bar{\Delta})$, the argument as in \cite[Lemma 3.38]{kollar-mori} shows that $\bar{\phi}^{-1}$ is an isomorphism on an open subset containing $\nu'^{-1}(\eta_{V'})$ if every irreducible component of 
 $\overline{\nu'^{-1}(\eta_{V'})}$ is an lc center of $(\bar{X}',\bar{\Delta}')$. 
In this way, to prove that $(X',\Delta')$ is sdlt, it is sufficient to prove that every irreducible component of $\overline{\nu'^{-1}(\eta_{V'})}$ is an lc center of $(\bar{X}',\bar{\Delta}')$. 

We note that $\nu'^{-1}(\eta_{V'})$ consists of finite points. 
We define $p \in \nu'^{-1}(\eta_{V'})$ to be the point such that $\overline{\{p\}}$ is the center of $P$ on $\bar{X}'$. 
If $p = \nu'^{-1}(\eta_{V'})$, then there is nothing to prove, so we may assume that $\nu'^{-1}(\eta_{V'})$ has at least two points. 
Let $D'\subset \bar{X}'$ be the conductor of $X'$, and let $D'^{\nu}=\amalg_{i}D'_{i}$ denote the normalization of $D'$. 
Then $\nu'(D')\ni \eta_{V'}$, and $D'^{\nu}=\amalg_{i}D'_{i}$ is the irreducible decomposition of $D'$ because $(\bar{X}',\bar{\Delta}')$ is a disjoint union of dlt pairs. 
Let $\Delta_{D'_{i}}$ be a $\mathbb{Q}$-divisor on $D'_{i}$ defined by adjunction $K_{D'_{i}}+\Delta_{D'_{i}}=\restriction{(K_{\bar{X}'}+\bar{\Delta}')}{D'_{i}}$. 

We pick any point $q \in \nu'^{-1}(\eta_{V'})$, and we will prove that $\overline{\{q\}}$ is an lc center of $(\bar{X}',\bar{\Delta}')$. 
The construction of $X'$ with the gluing theory as in \cite[Corollary 5.33]{kollar-mmp} shows that $X'$ is the geometric quotient of $\bar{X}'$ by an involution $\tau$ of $\amalg_{i}D'_{i}$ such that $\tau_{*}(\amalg_{i}\Delta_{D'_{i}})=\amalg_{i}\Delta_{D'_{i}}$. 
From this (see also \cite[Definition 5.31]{kollar-mmp} and \cite[9.3]{kollar-mmp}), we can find $p_{1},\cdots,\,p_{m}\in \nu'^{-1}(\eta_{V'})$ such that the following conditions hold. 
\begin{itemize}
\item[---]
$p_{1}=p$ and $p_{m}=q$, and
\item[---]
for each $1 \leq l < m$, there is an index $i(l)$ such that $p_{l} \in D'_{i(l)}$ and the morphism $\restriction{\tau}{D'_{i(l)}}\colon D'_{i(l)} \to \tau\left(D'_{i(l)}\right)$ sends $p_{l}$ to $p_{l+1}$ 
(note that we can have $D_{i(l)}=D_{i(l')}$ for some $l \neq l'$, and $\tau(D'_{i(l)})$ is not necessarily equal to $D'_{i(l+1)}$).
\end{itemize}
We will show that $\overline{\{p_{l}\}}$ is an lc center of $(\bar{X}',\bar{\Delta}')$ for every $1\leq l \leq m$. 
The case of $l=1$ is clear from the definition of $p$. 
Suppose further that $\overline{\{p_{l}\}}$ is an lc center of $(\bar{X}',\bar{\Delta}')$. 
Note that every $D'_{i}$ is an lc center of $(\bar{X}',\bar{\Delta}')$, and $(\bar{X}',\bar{\Delta}')$ is a disjoint union of dlt pairs. 
So $\overline{\{p_{l}\}}$ is an lc center of $(D'_{i(l)},\Delta_{D'_{i(l)}})$. 
There is an index $k$ such that $D'_{k}=\tau\left(D'_{i(l)}\right)$ and $\Delta_{D'_{k}}=\tau_{*}\Delta_{D'_{i(l)}}$. 
Then $\overline{\{p_{l+1}\}}$ is an lc center of $\left(D'_{k},\Delta_{D'_{k}}\right)$ because $\restriction{\tau}{D'_{i(l)}}$ is an isomorphsim. 
Then $\overline{\{p_{l+1}\}}$ is an lc center of $(\bar{X}',\bar{\Delta}')$ since $(\bar{X}',\bar{\Delta}')$ is a disjoint union of dlt pairs and $D'_{k}$ is an lc center of $(\bar{X}',\bar{\Delta}')$ (see, for example, \cite[Theorem 4.16]{kollar-mmp}). 
By induction on $l$, we see that $\overline{\{p_{l}\}}$ is an lc center of $(\bar{X}',\bar{\Delta}')$ for all $1\leq l \leq m$. 
So $\overline{\{q\}}=\overline{\{p_{m}\}}$ is an lc center of $(\bar{X}',\bar{\Delta}')$. 

We have proved that every irreducible component of the closure of $\nu'^{-1}(\eta_{V'})$ is an lc center of $(\bar{X}',\bar{\Delta}')$. 
Thus, the birational map $\bar{\phi}^{-1}\colon \bar{X}' \dashrightarrow \bar{X}$ is an isomorphism on an open subset containing $\nu'^{-1}(\eta_{V'})$. 
Then the birational map $\phi^{-1}\colon X'\dashrightarrow X$ is an isomorphism on a neighborhood of the generic point of $V'$. 
Thus, $(X',\Delta')$ is sdlt.  
\end{proof}

\section{Proof of main result}\label{sec4}

\begin{lem}[\emph{cf.}~\protect{\cite[Theorem 1.2]{fujino-fund-slc}, \cite[Remark 1.5]{fujino-fund-slc}}]\label{lem--resol}
Let $(X,\Delta)$ be a quasi-projective slc pair such that every irreducible component of $X$ is normal in codimension one. 
Then there is a projective birational morphism $\phi\colon \tilde{X}\to X$ such that if we define an $\mathbb{R}$-divisor $\Xi$ on $\tilde{X}$ by $K_{\tilde{X}}+\Xi=\phi^{*}(K_{X}+\Delta)$, then the following properties hold.
\begin{enumerate}[label=(\roman*)]
\item \label{lem--(i)}
$(\tilde{X},\Xi)$ is a globally embedded simple normal crossing pair, in other words, there exists a smooth quasi-projective variety $M$ of dimension ${\rm dim}X+1$ and an snc $\mathbb{R}$-divisor $B$ on $M$ such that 
\begin{itemize}
\item[---]
$\tilde{X}$ is an snc divisor on $M$ and $\tilde{X}+B$ is an snc $\mathbb{R}$-divisor on $M$,  
\item[---]
$\tilde{X}$ and $B$ have no common components, and 
\item[---]
$\Xi=\restriction{B}{\tilde{X}}$,
\end{itemize}
\item \label{lem--(ii)}
$\phi$ is an isomorphism over all the generic points of codimension one singular locus of $X$, 
\item \label{lem--(iii)}
if we write $\Xi=\tilde{\Delta}-\tilde{E}$ where $\tilde{\Delta}$ and $\tilde{E}$ have no common components, then $\tilde{\Delta}$ and $\tilde{E}$ are both $\mathbb{R}$-Cartier,  
\item  \label{lem--(iv)}
for every point $x\in \tilde{X}$, the number of irreducible components of $\tilde{X}$ containing $x$ is at most two, and 
\item  \label{lem--(v)}
every codimension one singular point of $\tilde{X}$ maps to a codimension one singular point of $X$. 
\end{enumerate} 
\end{lem}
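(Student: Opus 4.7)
My plan is to invoke Fujino's construction of a log resolution of an slc pair as a globally embedded simple normal crossing pair \cite[Theorem 1.2 and Remark 1.5]{fujino-fund-slc}, and to observe that, under our extra hypothesis that each irreducible component of $X$ is normal in codimension one, this resolution can be arranged so as to satisfy the refinements (ii), (iv), and (v).

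First I would pass to the normalization $\nu\colon \bar{X}\to X$ and define $\bar{\Delta}$ by $K_{\bar{X}}+\bar{\Delta}=\nu^{*}(K_{X}+\Delta)$; let $\bar{D}$ denote the conductor. The codimension-one normality hypothesis guarantees that each irreducible component $(\bar{X}^{(j)},\bar{\Delta}^{(j)})$ is a normal lc pair and that $\bar{D}$ is a reduced Weil divisor appearing in $\bar{\Delta}$ with coefficient one. I would then take functorial log resolutions of the pairs $(\bar{X}^{(j)},\bar{\Delta}^{(j)})$ chosen to be isomorphisms over the snc locus of $(\bar{X}^{(j)},\bar{D}^{(j)}+\Supp\bar{\Delta}^{(j)})$. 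By functoriality, the involution $\tau$ on the normalization $\amalg_i D_i^{\nu}$ of $\bar{D}$ that recovers $X$ from $\bar{X}$ via the gluing theory of \cite[Chapter 5]{kollar-mmp} lifts to the strict transforms of the conductor components; gluing the log resolutions along these lifted involutions produces $\phi\colon \tilde{X}\to X$.

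Following Fujino's construction, \'etale-locally around any point of $\tilde{X}$ at most two of its components meet, and the glued resolution embeds as an snc divisor $\tilde{X}\subset M$ in a smooth quasi-projective variety with $\Xi$ equal to the restriction of an snc divisor $B$ sharing no component with $\tilde{X}$; this gives (i). Writing $\Xi=\tilde{\Delta}-\tilde{E}$ with no common components, both $\tilde{\Delta}$ and $\tilde{E}$ are restrictions of snc $\mathbb{Q}$-Cartier divisors from $M$, giving (iii).

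Finally I would verify (ii), (iv), and (v) directly from the construction. Condition (ii) is immediate from the choice of log resolutions as isomorphisms over the codimension-one snc locus of each $\bar{X}^{(j)}$, and hence over all generic points of the codimension-one singular locus of $X$. Condition (iv) follows because the gluing identifies at most two components of $\bar{X}$ along any single conductor component. For (v), the codimension-one singular locus of $\tilde{X}$ is supported on the strict transforms of conductor components, and these are mapped by $\phi$ onto the conductor of $X$, namely the codimension-one singular locus of $X$. The main technical obstacle, handled by Fujino's construction, is coordinating the independent log resolutions of the components with the gluing involution $\tau$ so that the result actually embeds as an snc divisor in a smooth ambient space; the codimension-one normality hypothesis is what ensures this involution identifies components only pairwise, which in turn is what yields (iv).
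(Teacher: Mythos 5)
Your overall plan --- cite Fujino's construction for (i) and then check (ii)--(v) --- matches the paper's, but two of your steps have genuine gaps. First, the construction itself: you propose to resolve each normalized component $(\bar{X}^{(j)},\bar{\Delta}^{(j)})$ separately and then glue along a lift of the involution $\tau$. Functoriality of resolution is functoriality with respect to smooth (e.g.\ \'etale) morphisms; it does not make the strict transform of a conductor component $D_{i}$ inside the resolution of $\bar{X}^{(j)}$ canonically isomorphic to the strict transform of $\tau(D_{i})$ inside the resolution of the adjacent component, so the claim that $\tau$ lifts is unjustified. More importantly, even if the gluing could be performed, Koll\'ar's gluing theory would only return a demi-normal scheme; it gives no global embedding of $\tilde{X}$ as an snc divisor in a smooth variety $M$ of dimension ${\rm dim}X+1$ with $\Xi=\restriction{B}{\tilde{X}}$, which is the whole content of (i). The paper instead invokes the ambient semi-snc resolution results of Bierstone--Milman and Bierstone--Vera Pacheco, via \cite[Proof of Theorem 1.2]{fujino-fund-slc}, \cite[Remarks 4.1--4.3]{fujino-fund-slc} and \cite[Corollary 10.55]{kollar-mmp}, which resolve $X$ inside an embedding rather than component by component.

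Second, (iv) and (v) are not automatic from the initial resolution, and your arguments for them only control codimension-one behaviour. For (iv) you argue that the gluing identifies at most two components along any conductor component, but (iv) is a statement about \emph{every} point of $\tilde{X}$: three or more strict transforms can still pass through a common point of codimension at least two, as the coordinate hyperplanes in $\mathbb{A}^{3}$ do at the origin. For (v), two strict transforms can meet along a divisor of $\tilde{X}$ lying over a subset of $X$ of codimension at least two, producing a codimension-one singular point of $\tilde{X}$ that does not map to one of $X$; your assertion that the codimension-one singular locus of $\tilde{X}$ is supported on strict transforms of the conductor is exactly what needs proof. The paper achieves (iv) and (v) by an extra step you are missing: repeatedly blowing up strata of $(M,\tilde{X})$ of codimension at least two and replacing $\tilde{X}$ by its birational transform, checking that this preserves (i)--(iii). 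Your verification of (ii) and (iii) is fine and essentially the same as the paper's.
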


\begin{proof}
By \cite[Proof of Theorem 1.2]{fujino-fund-slc} and \cite[Remarks 4.1--4.3]{fujino-fund-slc} with the aid of \cite{bm}, \cite{bvp}, and \cite[Corollary 10.55]{kollar-mmp}, we may construct a projective birational morphism $\phi\colon \tilde{X}\to X$ such that defining an $\mathbb{R}$-divisor $\Xi$ on $\tilde{X}$ by $K_{\tilde{X}}+\Xi=\phi^{*}(K_{X}+\Delta)$, then $(\tilde{X},\Xi)$ satisfies \ref{lem--(i)} of Lemma \ref{lem--resol}.
Then, \ref{lem--(ii)} of Lemma \ref{lem--resol} directly follows from the construction of $\phi$ in \cite[Proof of Theorem 1.2]{fujino-fund-slc} (see also \cite{bm}, \cite{bvp}, \cite[Corollary 10.55]{kollar-mmp}), or we can check \ref{lem--(ii)} of Lemma \ref{lem--resol} as follows: 
by \cite[Theorem 1.2 (4)]{fujino-fund-slc}, we see that $\phi_{*}\mathcal{O}_{\tilde{X}}\simeq \mathcal{O}_{X}$.
In particular, $\phi$ has connected fibers over all codimension one singular points. 
Since $\phi$ is birational, \ref{lem--(ii)} of Lemma \ref{lem--resol} holds. 
With \ref{lem--(i)} of Lemma \ref{lem--resol}, we show \ref{lem--(iii)} of Lemma \ref{lem--resol}. 
Let $B_{+}$ (resp.~$B_{-}$) be the positive (resp.~negative) part of $B$ in \ref{lem--(i)}. 
Then $\Xi=\restriction{B_{+}}{\tilde{X}}-\restriction{B_{-}}{\tilde{X}}$. 
Since $\tilde{X}+B$ is an snc $\mathbb{R}$-divisor on $M$ and $\tilde{X}$, $B_{+}$, and $B_{-}$ have no common components each other, the codimension of ${\rm Supp}\restriction{B_{+}}{\tilde{X}} \cap {\rm Supp}\restriction{B_{-}}{\tilde{X}}$ in $\tilde{X}$ is at least two. 
This shows that $\restriction{B_{+}}{\tilde{X}}$ and $\restriction{B_{-}}{\tilde{X}}$ have no common components. 
By definitions of $\tilde{\Delta}$ and $\tilde{E}$, the relations $\tilde{\Delta}=\restriction{B_{+}}{\tilde{X}}$ and $\tilde{E}=\restriction{B_{-}}{\tilde{X}}$ hold. 
Therefore, \ref{lem--(iii)} of Lemma \ref{lem--resol} holds. 
If $\tilde{X}$ does not satisfy \ref{lem--(iv)} and \ref{lem--(v)} of Lemma \ref{lem--resol}, then  we take a blow-up of a stratum of $(M,\tilde{X})$ of codimension greater than or equal to two, and we replace $\tilde{X}$ by the birational transform. 
Note that the replacement keeps \ref{lem--(i)}--\ref{lem--(iii)}. 
By repeating the replacement, we get $\tilde{X}$ satisfying \ref{lem--(iv)} and \ref{lem--(v)} of Lemma \ref{lem--resol}. 
\end{proof}

\begin{proof}[Proof of Theorem \ref{thm--crepant-sdlt}]
Let $(X,\Delta)$ be as in Theorem \ref{thm--crepant-sdlt}. 
By using Lemma \ref{lem--resol}, we get a projective birational morphism $\phi\colon \tilde{X}\to X$ satisfying the conditions \ref{lem--(i)}--\ref{lem--(v)} of Lemma \ref{lem--resol}. 
We freely use the notation as in \ref{lem--(i)}--\ref{lem--(v)} of Lemma \ref{lem--resol}. 
We may write
$$K_{\tilde{X}}+\tilde{\Delta}=\phi^{*}(K_{X}+\Delta)+\tilde{E}$$
where $\tilde{\Delta}$ and $\tilde{E}$ are effective $\mathbb{Q}$-divisors on $\tilde{X}$ which have no common components. 
As in the proof of Lemma \ref{lem--resol}, putting $B_{+}$ as the effective part of $B$ then $\tilde{\Delta}=\restriction{B_{+}}{\tilde{X}}$. 
Therefore, $(\tilde{X},\tilde{\Delta})$ has only simple normal crossing singularities. 

We show that $(\tilde{X},\tilde{\Delta})$ is an slc pair and the morphism $(\tilde{X},\tilde{\Delta})\to X$ satisfies the condition of Lemma \ref{lem--mmpstepslc}. 
By construction, $(\tilde{X},\tilde{\Delta})$ is an slc pair such that $\tilde{\Delta}$ is a $\mathbb{Q}$-divisor. 
Let $\tilde{X}^{\nu}\to \tilde{X}$ be the normalization, and let $\tilde{\Delta}^{\nu}$ be a $\mathbb{Q}$-divisor on $\tilde{X}^{\nu}$ such that $K_{\tilde{X}^{\nu}}+\tilde{\Delta}^{\nu}$ is equal to the pullback of $K_{\tilde{X}}+\tilde{\Delta}$. 
Pick an irreducible component $\tilde{X}^{(j)}$ of $\tilde{X}^{\nu}$, and we put $\tilde{\Delta}^{(j)}$ and $\tilde{E}^{(j)}$ as the restrictions of $\tilde{\Delta}^{\nu}$ and $\tilde{E}$ to $\tilde{X}^{(j)}$, respectively. 
Then the relation
$$K_{\tilde{X}^{(j)}}+\tilde{\Delta}^{(j)}\sim_{\mathbb{Q},X}\tilde{E}^{(j)}$$
holds. 
By \ref{lem--(i)} of Lemma \ref{lem--resol} and the definitions of $\tilde{\Delta}$ and $\tilde{E}$, the pair $(\tilde{X}^{(j)}, \tilde{\Delta}^{(j)}+\tilde{E}^{(j)})$ is log smooth and $\tilde{\Delta}^{(j)}$ and $\tilde{E}^{(j)}$ have no common components. 
By simple computations of discrepancies with \cite[Lemma 2.29]{kollar-mori} and \cite[Lemma 2.45]{kollar-mori}, we can check that ${\rm Supp}\tilde{E}^{(j)}$ does not contain the image of any prime divisor $P$ over $\bar{X}^{(j)}$ satisfying $a(P,\bar{X}^{(j)},\bar{\Delta}^{(j)})<0$. 
In particular, $(\tilde{X},\tilde{\Delta})$ satisfies the condition of Lemma \ref{lem--mmpstepslc}. 
Therefore, we see that $(\tilde{X},\tilde{\Delta})$ is an slc pair and the morphism $(\tilde{X},\tilde{\Delta})\to X$ satisfies the condition of Lemma \ref{lem--mmpstepslc}. 

Let $\tilde{H}$ be a general ample $\mathbb{Q}$-Cartier divisor on $\tilde{X}$ such that $K_{\tilde{X}}+\tilde{\Delta}+\tilde{H}$ is ample over $X$. 
By the above argument, we may apply Lemma \ref{lem--mmpstepslc} to the morphism $(\tilde{X},\tilde{\Delta})\to X$. 
By Definition \ref{defn--sequencemmp}, we get a sequence of MMP steps over $X$ 
$$(\tilde{X},\tilde{\Delta})=:(\tilde{X}_{0},\tilde{\Delta}_{0})\dashrightarrow  (\tilde{X}_{1},\tilde{\Delta}_{1})\dashrightarrow \cdots \dashrightarrow (\tilde{X}_{i},\tilde{\Delta}_{i})\dashrightarrow \cdots$$
such that if we put $\lambda_{i}={\rm inf}\left\{\mu \in \mathbb{R}_{\geq 0}\mid K_{\tilde{X}_{i}}+\tilde{\Delta}_{i}+\mu \tilde{H}_{i}\ \text{is nef over}\ X\right\}$, where $\tilde{H}_{i}$ is the birational transform of $\tilde{H}$ on $\tilde{X}_{i}$, then $\lambda_{i} \geq \lambda_{i+1}$ for every $i\geq 0$.  

We take the normalization $\nu_{i}\colon \tilde{X}_{i}^{\nu} \to \tilde{X}_{i}$ and we define $\tilde{\Delta}_{i}^{\nu}$ by $K_{\tilde{X}_{i}^{\nu}}+\tilde{\Delta}_{i}^{\nu}=\nu_{i}^{*}(K_{\tilde{X}_{i}}+\tilde{\Delta}_{i})$ for each $i\geq 0$. 
Let $\amalg_{j}(\tilde{X}_{i}^{(j)},\tilde{\Delta}_{i}^{(j)})$ be the irreducible decomposition of $(\tilde{X}_{i}^{\nu}, \tilde{\Delta}_{i}^{\nu})$, and let $\tilde{H}_{i}^{(j)}$ be the pullback of $\tilde{H}_{i}$ to $\tilde{X}_{i}^{(j)}$. 
By \cite[Lemma 12]{ambrokollar}, for all $j$, each step of the sequence of birational maps 
$$(\tilde{X}^{(j)},\tilde{\Delta}^{(j)})=(\tilde{X}_{0}^{(j)},\tilde{\Delta}_{0}^{(j)})\dashrightarrow  (\tilde{X}_{1}^{(j)},\tilde{\Delta}_{1}^{(j)})\dashrightarrow \cdots \dashrightarrow (\tilde{X}_{i}^{(j)},\tilde{\Delta}_{i}^{(j)})\dashrightarrow \cdots$$
is an MMP step over $X$ as in Definition \ref{defn--mmpslc}. 
Then $(\tilde{X}_{i}^{(j)},\tilde{\Delta}_{i}^{(j)}+\lambda_{i}\tilde{H}_{i}^{(j)})$ is a weak lc model of $(\tilde{X}^{(j)},\tilde{\Delta}^{(j)}+\lambda_{i}\tilde{H}^{(j)})$ over $X$ (\cite[Definition 3.50]{kollar-mori}) for all $i$. 
Since $K_{\tilde{X}^{(j)}}+\tilde{\Delta}^{(j)}\sim_{\mathbb{Q},X}\tilde{E}^{(j)}$, $(\tilde{X}^{(j)},\tilde{\Delta}^{(j)})$ is dlt, and $\tilde{H}^{(j)}$ is ample, the argument of \cite[Proof of Theorem 3.4]{birkar-flip} implies that there exists a positive integer $m$ such that for all $j$ the MMP for $(\tilde{X}^{(j)},\tilde{\Delta}^{(j)})$ terminates after $m$-th steps. 
Then $\lambda_{m}=0$, and clearly $K_{\tilde{X}_{m}^{(j)}}+\tilde{\Delta}_{m}^{(j)} \sim_{\mathbb{Q},X} 0$ for all $j$. 
So, the MMP over $X$ for $(\tilde{X},\tilde{\Delta})$ terminates with $(\tilde{X}_{m},\tilde{\Delta}_{m})$ such that $K_{\tilde{X}_{m}}+\tilde{\Delta}_{m}\sim_{\mathbb{Q},X}0$.

We put $(Y,\Gamma)=(\tilde{X}_{m},\tilde{\Delta}_{m})$, and we denote the morphism $\tilde{X}_{m}\to X$ by $f$. 
We show that $(Y,\Gamma)$ is sdlt and $f\colon Y\to X$ is the desired morphism. 

Firstly, by \ref{lem--(i)} of Lemma \ref{lem--resol}, $(\tilde{X}_{0},\tilde{\Delta}_{0})$  has only simple normal crossing singularities. 
Using Lemma \ref{lem--mmp-preserve-sdlt} repeatedly, we see that $(Y,\Gamma)$ is sdlt. 

Secondly, by \ref{lem--(ii)} of Lemma \ref{lem--resol} and construction of MMP steps, $f$ is an isomorphism over all codimension one singular points of $X$. 
By \ref{lem--(v)} of Lemma \ref{lem--resol}, we see that every codimension one singular point of $Y$ maps to a codimension one singular point of $X$. 
So we can find an open dense subset $U \subset X$ such that $U$ (resp.~$f^{-1}(U))$ contains all the codimension one singular points of $X$ (resp.~$Y$) and $f$ is an isomorphism over $U$.

Thirdly, by construction the relation $K_{Y}+\Gamma=f^{*}(K_{X}+\Delta)$ holds. 

Finally, we show that $K_{Y}$ is $\mathbb{Q}$-Cartier and $(Y,0)$ is semi-terminal. 
By \ref{lem--(i)} of Lemma \ref{lem--resol}, $K_{\tilde{X}}$ is $\mathbb{Q}$-Cartier.  Then, by using Remark \ref{rem--mmpstepslc} repeatedly, we see that $K_{Y}$ is $\mathbb{Q}$-Cartier. 
Let $\nu_{Y}\colon Y^{\nu}\to Y$ be the normalization. 
We can write $K_{Y^{\nu}}+D_{Y^{\nu}}=\nu_{Y}^{*}K_{Y}$, where $D_{Y^{\nu}}$ is the conductor. 
Let $P$ be an exceptional prime divisor over $Y$ and $c_{Y^{\nu}}(P)$ the center of $P$ on $Y^{\nu}$. 
By \ref{lem--(i)} and \ref{lem--(iv)} of Lemma \ref{lem--resol}, the pair of $\tilde{X}^{\nu}$ and the conductor $D_{\tilde{X}^{\nu}}\subset \tilde{X}^{\nu}$ is a log smooth pair, and $D_{\tilde{X}^{\nu}}$ is a disjoint union of smooth varieties. 
So, if the birational map $\tilde{X}^{\nu}\dashrightarrow Y^{\nu}$ is isomorphic at the generic point of $c_{Y^{\nu}}(P)$, then on a neighborhood of the generic point of $c_{Y^{\nu}}(P)$ the pair $(Y^{\nu},D_{Y^{\nu}})$ is log smooth and $D_{Y^{\nu}}$ is irreducible. 
From this, we see that $a(P,Y,0)=a(P,Y^{\nu},D_{Y^{\nu}}) \geq 0$ and the equality holds only if ${\rm codim}_{Y^{\nu}}(c_{Y^{\nu}}(P))=2$ and $c_{Y^{\nu}}(P) \subset D_{Y^{\nu}}$. 
If $\tilde{X}^{\nu}\dashrightarrow Y^{\nu}$ is not isomorphic at the generic point of $c_{Y^{\nu}}(P)$, then 
$$a(P,Y,0)=a(P,Y^{\nu},D_{Y^{\nu}}) \geq a(P,Y^{\nu},D_{Y^{\nu}}+\nu_{Y}^{*}\Gamma)>a(P,\tilde{X}^{\nu},\tilde{\Delta}^{\nu})$$
where the final inequality follows from the fact that $(\tilde{X}^{\nu},\tilde{\Delta}^{\nu})\dashrightarrow (Y^{\nu}, D_{Y^{\nu}}+\nu_{Y}^{*}\Gamma)$ is a sequence of MMP steps by \cite[Lemma 12]{ambrokollar}.  
Since $\tilde{X}^{\nu}\dashrightarrow Y^{\nu}$ is not isomorphic at the generic point of $c_{Y^{\nu}}(P)$, the center of $P$ on $\tilde{X}^{\nu}$ is contained in ${\rm Supp}\tilde{E}$. 
By \ref{lem--(i)} of Lemma \ref{lem--resol}, we have $a(P,\tilde{X}^{\nu},\tilde{\Delta}^{\nu})\geq 0$. 
Thus $a(P,Y,0)>0$. 
In this way, we see that $(Y,0)$ is semi-terminal. 
\end{proof}

Finally, we introduce a result of crepant sdlt model for stable morphisms.

\begin{rem}\label{thm--crepant-sdlt-stable-mor}
Let $g\colon X\to C$ be a projective flat morphism between quasi-projective schemes such that $C$ is a smooth curve. 
Let $(X,\Delta)$ be a demi-normal pair such that $K_{X}+\Delta$ is $\mathbb{Q}$-Cartier and every irreducible component of $X$ is normal in codimension one. 
Suppose that $(X,\Delta+g^{*}c)$ is an slc pair for every closed point $c\in C$. 
By the proofs of Theorem \ref{thm--crepant-sdlt} and \cite[Corollary 10.46]{kollar-mmp} with the aid of \cite{bvp} and \cite{fujino-fund-slc},
we may find finitely many closed points $c_{1},\ldots, c_{m}\in C$ and 
an sdlt pair $(Y,\Gamma)$ with a projective birational morphism $f\colon Y\to X$ satisfying the following properties.
\begin{itemize}
\item[---]
There is an open dense subset $U \subset X$ such that $U$ (resp.~$f^{-1}(U))$ contains all the codimension one singular points of $X$ (resp.~$Y$) and $f$ is an isomorphism over $U$,
\item[---]
$K_{Y}+\Gamma=f^{*}(K_{X}+\Delta+\sum_{i=1}^{m}g^{*}c_{i})$, 
\item[---]
$K_{Y}$ is $\mathbb{Q}$-Cartier and $(Y,0)$ is semi-terminal in the sense of Fujita \cite{fujita}, and
\item[---]
$(Y,\Gamma+f^{*}g^{*}c)$ is an sdlt pair for every closed point $c\in C\setminus \{c_{1},\ldots,c_{m}\}$. 
\end{itemize}

If we are given $g\colon (X,\Delta)\to C$ as above without the assumption that the irreducible components of $X$ are normal in codimension one, we may apply Remark \ref{rem--double-cover}. 
We get a morphism $(X',\Delta')\to C$ satisfying \ref{rem--double-cover-(a)}--\ref{rem--double-cover-(d)} of Remark \ref{rem--double-cover}, then we can construct the crepant model $(Y,\Gamma)\to (X',\Delta')$ as above. 
\end{rem}

\section{Properties of semi-divisorial log terminal pairs}\label{sec5}

Our definition of sdlt pairs (Definition \ref{defn--sdlt}) is due to Koll\'ar \cite[Definition 5.19]{kollar-mmp}. The definition not only behaves well under the MMP (see Lemma \ref{lem--mmp-preserve-sdlt}) but also have a much better structure than that of slc pairs. 
In this section, we collect properties of sdlt pairs.

\begin{thm}\label{thm--sdlt-sncproperty}
Let $(X,\Delta)$ be an sdlt pair and $U\subset X$ the largest open subset such that $(U,\restriction{\Delta}{U})$ has only simple normal crossing singularities. 
Then $U$ contains all the generic points of the intersection of arbitrary finitely many irreducible components of $X$. 
\end{thm}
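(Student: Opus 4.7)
The plan is to produce, for every such generic point $\eta$, a prime divisor $E$ over $X$ satisfying $a(E,X,\Delta)=-1$ whose image on $X$ equals $W:=\overline{\{\eta\}}$. Granted such an $E$, Definition~\ref{defn--sdlt} forces $W \not\subset X\setminus U$, and because $W$ is irreducible and $U$ is open, this gives $\eta\in U$.

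By Remark~\ref{rem--sdlt-differ}, each irreducible component $X_{j}$ of $X$ is already normal, and the normalization $\nu\colon \bar{X}=\coprod_{j} X_{j}\to X$ satisfies $K_{X_{j}}+\bar{\Delta}^{(j)}=\nu^{*}(K_{X}+\Delta)|_{X_{j}}$ with every $(X_{j},\bar{\Delta}^{(j)})$ dlt and with the conductor $D^{(j)}$ appearing with coefficient one in $\bar{\Delta}^{(j)}$. By this crepant identity, it is enough to construct on the single component $X_{i_{1}}$ a prime divisor of discrepancy $-1$ with respect to $(X_{i_{1}},\bar{\Delta}^{(i_{1})})$ whose center is $\overline{\{\eta^{(i_{1})}\}}$, where $\eta^{(i_{1})}$ denotes the preimage of $\eta$ in $X_{i_{1}}$; composing with $\nu$ then yields the desired $E$ over $X$.

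For a dlt pair, any irreducible component of an intersection of components of the reduced boundary is an lc center and so admits a divisorial valuation of discrepancy $-1$ with that center; this is standard dlt theory (\emph{cf.}~\cite[Theorem 4.16]{kollar-mmp}). The task therefore reduces to exhibiting $\overline{\{\eta^{(i_{1})}\}}$ as an irreducible component of $\bigcap_{\ell=2}^{k} D^{(i_{1})}_{\ell}$ for some choice of $k-1$ distinct conductor components $D^{(i_{1})}_{\ell}\subset D^{(i_{1})}$, one for each $\ell=2,\ldots,k$.

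The main obstacle lies in this last geometric step. The inclusion $W \subset X_{i_{1}}\cap X_{i_{\ell}}$ means that $\eta^{(i_{1})}$ is identified under the gluing involution $\tau$ of \cite[Definition 5.31 and 9.3]{kollar-mmp} with a point of $X_{i_{\ell}}$, and since such identifications occur only along codimension one conductor loci matched by $\tau$, the point $\eta^{(i_{1})}$ must lie on a conductor component glued to $X_{i_{\ell}}$. Selecting one such component $D^{(i_{1})}_{\ell}$ for each $\ell$, and observing that distinct target components $X_{i_{\ell}}$ force distinct $D^{(i_{1})}_{\ell}$, produces $k-1$ components of $D^{(i_{1})}$ through $\eta^{(i_{1})}$; a dimension count—using that each $D^{(i_{1})}_{\ell}$ is codimension one in $X_{i_{1}}$ and that $W$ is a component of the $k$-fold intersection in $X$—identifies $\overline{\{\eta^{(i_{1})}\}}$ as a component of $\bigcap_{\ell=2}^{k} D^{(i_{1})}_{\ell}$, completing the reduction.
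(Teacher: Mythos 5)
Your overall strategy coincides with the paper's: reduce to a normal irreducible component $X_{i_1}$ whose induced pair is dlt, exhibit the preimage of $\eta$ as an lc center there via intersections of conductor divisors (so that \cite[Theorem 4.16]{kollar-mmp} supplies a divisor of discrepancy $-1$ with that center), and conclude from Definition~\ref{defn--sdlt}. The first three paragraphs are fine. The gap is in the final paragraph, precisely the step you flag as the main obstacle, and it is exactly where the sdlt hypothesis has to do real work.

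From $W\subset X_{i_1}\cap X_{i_\ell}$ you infer that $\eta^{(i_1)}$ lies on a conductor component of $X_{i_1}$ glued by $\tau$ to a component of the conductor on $X_{i_\ell}$. This does not follow from the gluing formalism: $\eta^{(i_1)}$ is identified with a point of $X_{i_\ell}$ only under the equivalence relation \emph{generated} by $\tau$, so the identification may pass through a chain of gluings involving other components or self-gluings, and no conductor component through $\eta^{(i_1)}$ need be matched with $X_{i_\ell}$. The paper's own non-sdlt example $X=(xy=zt=0)\subset\mathbb{A}^4$ shows the inference is genuinely false for demi-normal pairs in general: the planes $A=(x=z=0)$ and $D=(y=t=0)$ meet only at the origin, the conductor on $A$ is the union of the $y$-axis (glued to $B=(x=t=0)$) and the $t$-axis (glued to $C=(y=z=0)$), and the origin of $A$ lies on no conductor component glued to $D$, even though $0_A\sim 0_D$ through the chain $0_A\to 0_B\to 0_D$. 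So your step must invoke sdlt-ness, and it does not. What is needed is that $X_{i_1}\cap X_{i_\ell}$ is pure of codimension one in $X_{i_1}$, i.e.\ a union of conductor divisors each directly glued to $X_{i_\ell}$; the paper obtains this from the statement that the union $X_{i_1}\cup X_{i_\ell}$ is $S_2$ (\cite[Theorem 4.2]{fujino-van}), since the partial normalization $X_{i_1}\amalg X_{i_\ell}\to X_{i_1}\cup X_{i_\ell}$ fails to be an isomorphism exactly along $X_{i_1}\cap X_{i_\ell}$, which for an $S_2$ target cannot happen only in codimension at least two. Once this is supplied, your choice of the $D^{(i_1)}_{\ell}$ and the dimension count do identify $\overline{\{\eta^{(i_1)}\}}$ as a component of an intersection of boundary divisors and the argument closes; without it, the proof does not go through.
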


\begin{proof}
We pick irreducible components $X_{1},\ldots, X_{l}$ of $X$, and let $\eta$ be a generic point of $X_{1}\cap \cdots \cap X_{l}$. 
We put $U_{1}=\restriction{U}{X_{1}}$. 
It is sufficient to prove $\eta \in U_{1}$. 

By \cite[Proposition 5.20]{kollar-mmp}, $X_{1}$ is normal. 
Let $\Delta_{1}$ be an $\mathbb{R}$-divisor on $X_{1}$ such that $K_{X_{1}}+\Delta_{1}=\restriction{(K_{X}+\Delta)}{X_{1}}$. 
Then $(X_{1},\Delta_{1})$ is dlt. 
We consider $B_{i}=X_{1}\cap X_{i}$ for $2\leq i \leq l$. 
By \cite[Theorem 4.2]{fujino-van}, we see that $X_{1}\cup X_{i}$ is S$_{2}$.  
Since the morphism $X_{1}\amalg X_{i}\to X_{1}\cup X_{i}$ is not an isomorphism at all the generic points of $B_{i}$, it follows that $B_{i}$ is pure codimension one in $X_{1}$ for every $i$. 
This implies that $B_{i}$ are reduced divisors on $X_{1}$ that are components of $\lfloor \Delta_{1}\rfloor$. 
Then $\overline{\{\eta\}}$ is an lc center of $(X_{1},\Delta_{1})$ by \cite[Theorem 4.16]{kollar-mmp}. 
Thus, $\eta \in U_{1}$. 
\end{proof}

\begin{thm}\label{thm--sdlt-component-sdlt}
Let $(X,\Delta)$ be an sdlt pair and $Y \subsetneq X$ a union of irreducible components of $X$. 
Let $Y' \subset X$ be a union of irreducible components of $X$ such that $Y\cup Y'=X$ and $Y$ and $Y'$ have no common irreducible components. 
Then $Y\cap Y'$ is pure codimension one subscheme in $Y$, in other words, $Y\cap Y'$ is a Weil divisor on $Y$. 
Furthermore, if we put $\Delta_{Y}=Y\cap Y'+\restriction{\Delta}{Y}$, then $K_{Y}+\Delta_{Y}=\restriction{(K_{X}+\Delta)}{Y}$ and $(Y,\Delta_{Y})$ is an sdlt pair. 
\end{thm}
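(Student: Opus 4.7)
The plan is to first prove that $Y\cap Y'$ is pure codimension one in $Y$ and deduce the adjunction formula, then to check demi-normality of $Y$, and finally to verify the slc and sdlt conditions via the normalization of $Y$. For the first point, since $Y\cap Y'=\bigcup_{X_i\subset Y,\,X_j\subset Y'}X_i\cap X_j$, it suffices to show each $X_i\cap X_j$ is pure codimension one in $X_i$. This follows from the argument already used inside the proof of Theorem~\ref{thm--sdlt-sncproperty}: \cite[Proposition 5.20]{kollar-mmp} says $X_i$ is normal, and \cite[Theorem 4.2]{fujino-van} gives that $X_i\cup X_j$ is S$_{2}$, forcing the conductor of $X_i\amalg X_j\to X_i\cup X_j$ to be pure codimension one. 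Once $Y\cap Y'$ is known to be a Weil divisor, the standard conductor adjunction along the partial normalization $Y\amalg Y'\to X$ yields $K_Y+(Y\cap Y')=K_X|_Y$, and adding $\Delta|_Y$ gives $K_Y+\Delta_Y=(K_X+\Delta)|_Y$; in particular $K_Y+\Delta_Y$ is $\mathbb{R}$-Cartier.

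Next I would check demi-normality of $Y$. The scheme $Y$ is reduced and equidimensional by construction, S$_{2}$ by another application of \cite[Theorem 4.2]{fujino-van}, and its codimension one points are either smooth or nodal: a codimension one point $y\in Y$ lies on one or two irreducible components of $X$, and in the second case Theorem~\ref{thm--sdlt-sncproperty} puts $y$ in the snc locus of $(X,\Delta)$, so $Y$ is locally a union of one or two coordinate hyperplanes. The same snc description shows that $\mathrm{Supp}(\Delta_Y)$ avoids the codimension one singular locus of $Y$: such a singular point is a generic point of $X_i\cap X_k$ with both $X_i,X_k\subset Y$, and neither $Y\cap Y'$ (transverse to that intersection in the snc chart) nor $\Delta|_Y$ (inherited from $\Delta$, which avoids codimension one singularities of $X$) contains it.

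To verify slc and sdlt, I would work with the normalization $\nu_Y\colon Y^\nu=\coprod_{X_i\subset Y}X_i\to Y$. For each $X_i\subset Y$, the conductor of $X_i$ inside $Y$ is $\sum_{X_j\subset Y,\,j\ne i}(X_i\cap X_j)+\sum_{X_j\subset Y'}(X_i\cap X_j)$, which coincides with the conductor of $X_i$ inside $X$, so the restriction of $\Delta_Y$ to $X_i$ via normalization matches $\Delta^\nu|_{X_i}$. Thus the normalization of $(Y,\Delta_Y)$ is a disjoint union of dlt pairs and $(Y,\Delta_Y)$ is slc. For the sdlt condition I would prove the inclusion $U\cap Y\subset U_Y$, where $U$ and $U_Y$ denote the snc loci of $(X,\Delta)$ and $(Y,\Delta_Y)$: an ambient snc embedding of $(X,\Delta)$ near $y\in U$ restricts to an ambient snc embedding of $(Y,\Delta_Y)$ near $y$ by keeping only the coordinate hyperplanes corresponding to components of $Y$ containing $y$. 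Then any prime divisor $E$ over $Y$ with image in $Y\setminus U_Y$ has image in $X\setminus U$, and the identification $a(E,Y,\Delta_Y)=a(E,X_i,\Delta^\nu|_{X_i})=a(E,X,\Delta)$ combined with the sdlt property of $(X,\Delta)$ gives $a(E,Y,\Delta_Y)>-1$.

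The step I expect to require the most care is the componentwise matching of the normalization data $(\Delta_Y)^\nu$ with $\Delta^\nu$ — not a deep obstacle, but it requires a careful identification of conductors — and, closely related, the verification that the ambient snc embedding descends from $X$ to $Y$ so that $U\cap Y\subset U_Y$; once those are in place, sdlt-ness follows by applying the hypothesis on $(X,\Delta)$ componentwise to each exceptional discrepancy.
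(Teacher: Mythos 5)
Your proposal is correct and follows essentially the same route as the paper: it uses Theorem \ref{thm--sdlt-sncproperty} (and the S$_2$/Cohen--Macaulay input from \cite[Theorem 4.2]{fujino-van}) to get pure codimension one and demi-normality of $Y$, derives the adjunction via the conductor on the normalization, and verifies the sdlt condition on $U_Y=Y\cap U$. The only difference is one of detail: the paper compresses the slc/sdlt verification into ``we can easily check,'' whereas you spell out the matching of conductors on the normalization and the inclusion $U\cap Y\subset U_Y$, which is exactly the intended content.
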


\begin{proof}
Since $X$ is demi-normal, every codimension one point of $Y$ is smooth or normal crossing singularity. 
Furthermore, \cite[Theorem 4.2]{fujino-van} shows that $Y$ is Cohen-Macaulay. 
Hence, we see that $Y$ is a demi-normal scheme. 

Let $U\subset X$ be the largest open subset such that $(U,\restriction{\Delta}{U})$ has only simple normal crossing singularities. 
Then $U$ contains all the generic points of ${\rm Supp}\Delta$, and Theorem \ref{thm--sdlt-sncproperty} shows that $U$ contains all the generic points of $Y\cap Y'$. 
So $Y\cap Y'$ is pure codimension one, and the relation $K_{Y}+\Delta_{Y}=\restriction{(K_{X}+\Delta)}{Y}$ can be proved by taking the normalization of $X$. 
We put $U_{Y}=Y\cap U$. 
By the definition of sdlt pairs, we can easily check that $(Y,\Delta_{Y})$ is slc and $U_{Y}$ satisfies the condition of sdlt pairs for $(Y,\Delta_{Y})$. 
Therefore, $(Y,\Delta_{Y})$ is sdlt.
\end{proof}

\begin{thm}\label{thm--semiter-snc}
Let $X$ be a demi-normal scheme such that $K_{X}$ is $\mathbb{Q}$-Cartier and $(X,0)$ is sdlt and semi-terminal. 
Then, for every point $x \in X$, the number of irreducible components of $X$ containing $x$ is at most two. 
In particular, for every crepant sdlt model $(Y,\Gamma)$ as in Theorem \ref{thm--crepant-sdlt} and any point $y\in Y$, the number of irreducible components of $Y$ containing $y$ is at most two. 
\end{thm}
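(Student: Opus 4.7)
The plan is to argue by contradiction: suppose some point $x\in X$ is contained in three irreducible components $X_{1},X_{2},X_{3}$ of $X$, and produce an exceptional prime divisor $E$ over $X$ with $a(E,X,0)=-1$, contradicting the semi-terminal hypothesis (Definition~\ref{defn--semi-ter}).

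First I would localize in a simple normal crossing neighborhood. Pick a generic point $\eta$ of an irreducible component of $X_{1}\cap X_{2}\cap X_{3}$ whose closure contains $x$. Theorem~\ref{thm--sdlt-sncproperty}, applied to the three chosen components, ensures that $(X,0)$ has only simple normal crossing singularities at $\eta$, so in a Zariski neighborhood of $\eta$ the scheme $X$ embeds as $(x_{1}\cdots x_{p}=0)$ in a smooth ambient space, with $p\geq 3$ and $X_{1},X_{2},X_{3}$ corresponding to three of the coordinate hyperplanes. By \cite[Proposition~5.20]{kollar-mmp}, each $X_{j}$ is normal, and near $\eta$ the component $X_{1}$ is smooth. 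The conductor divisor $D_{1}$ on $X_{1}$, defined by $K_{X_{1}}+D_{1}=\restriction{\nu^{*}K_{X}}{X_{1}}$ for the normalization $\nu\colon X^{\nu}\to X$, is then a reduced snc divisor whose components $X_{1}\cap X_{2}$ and $X_{1}\cap X_{3}$ meet transversally along the smooth codimension-two stratum $Z:=X_{1}\cap X_{2}\cap X_{3}$.

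The heart of the argument is the discrepancy computation. I would blow up a closure of $Z$ inside $X_{1}$ to obtain an exceptional prime divisor $E$; as a prime divisor on a proper birational cover of $X^{\nu}$, it is an exceptional prime divisor over $X$ in the sense of Definition~\ref{defn--slc}. Because $Z$ is a smooth codimension-two subvariety of the smooth $X_{1}$, the standard blow-up formula gives $a(E,X_{1},0)=1$; because $Z$ lies in the transverse intersection of two components of $D_{1}$, the pullback of $D_{1}$ acquires multiplicity $1+1=2$ along $E$. Consequently
\[
a(E,X,0)=a(E,X_{1},D_{1})=1-2=-1,
\]
contradicting semi-terminality and proving the first assertion.

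For the ``in particular'' part, a crepant sdlt model $(Y,\Gamma)$ from Theorem~\ref{thm--crepant-sdlt} satisfies the hypotheses of the first statement applied to $(Y,0)$: property~(iii) provides the $\mathbb{Q}$-Cartier property of $K_{Y}$ and the semi-terminality of $(Y,0)$, while $(Y,0)$ inherits the sdlt property from $(Y,\Gamma)$ since $\Gamma\geq 0$ can only lower discrepancies and shrink the snc locus. The main obstacle I anticipate is bookkeeping: verifying that the blow-up of a Zariski closure of $Z$ inside $X_{1}$ really yields a prime divisor over $X$ in the global sense of Definition~\ref{defn--slc}. This is harmless, however, because the discrepancy depends only on the local behavior of the blow-up at the generic point of $Z$, so any global closure does the job.
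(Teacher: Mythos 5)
Your proof is correct and takes essentially the same route as the paper: both reduce to the observation that the conductor on the normal component $X_{1}$ contains the two reduced divisors $X_{1}\cap X_{2}$ and $X_{1}\cap X_{3}$, so a divisor extracted over their codimension-two intersection has discrepancy $-1$, contradicting semi-terminality. The paper phrases this by saying a component of $D_{2}\cap D_{3}$ is an lc center of the dlt pair $(X_{1},D)$, hence $(X_{1},D)$ is not canonical, rather than via your explicit blow-up computation, but the content is identical.
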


\begin{proof}
Suppose by contradiction that there are irreducible components $X_{1}$, $X_{2}$, and $X_{3}$ of $X$ such that $X_{1} \cap X_{2} \cap X_{3}$ is not an empty set. 
By Theorem \ref{thm--sdlt-sncproperty}, we see that $D_{2}:=X_{1}\cap X_{2}$ and $D_{3}:=X_{1}\cap X_{3}$ are reduced divisors on $X_{1}$. 
Let $D$ be a $\mathbb{Q}$-divisor on $X_{1}$ such that $K_{X_{1}}+D=\restriction{K_{X}}{X_{1}}$. 
Then $D\geq D_{2}+D_{3}$. 
Since $(X_{1},D)$ is dlt, an irreducible component of $D_{2}\cap D_{3}$ is an lc center of $(X_{1},D)$ of codimension two in $X_{1}$. 
In particular, $(X_{1},D)$ is not a canonical pair.  
This contradicts the definition of semi-terminal pairs (Definition \ref{defn--semi-ter}). 
So the number of irreducible components of $X$ containing $x$ is at most two. 
\end{proof}

\medskip

\end{document}